\newtheorem{theorem}{Theorem}[section]
\newtheorem{lemma}[theorem]{Lemma}
\newtheorem{corollary}[theorem]{Corollary}
\theoremstyle{definition}
\newtheorem{definition}[theorem]{Definition}
\newtheorem{example}[theorem]{Example}
\newtheorem{problem}[theorem]{Problem}
\newtheorem{proposition}[theorem]{Proposition}
\theoremstyle{remark}
\numberwithin{equation}{section}
\newcommand{\ff}{\mathbb{F}}
\newcommand{\gl}{\operatorname{GL}}
\newcommand{\sym}{\operatorname{Sym}}
\newcommand{\symnull}{\sym_0}
\newcommand{\psl}{\operatorname{PSL}}
\newcommand{\pgl}{\operatorname{PGL}}
\newcommand{\aff}{\operatorname{AGL}}
\newcommand{\symp}{\operatorname{Sp}}
\newcommand{\symptr}{\Delta}
\newcommand{\aut}{\operatorname{Aut}}
\newcommand{\negyz}{\diamond}
\newcommand{\otsz}{\pentagon}
\newcommand{\hszog}{\triangledown}
\begin{document}

\title[Supergroups of the infinite linear groups]{Permutation groups containing infinite linear groups and
  reducts of infinite dimensional linear spaces over the two element field}

\author[B. Bodor]{Bertalan Bodor }
\address[1,2,3]{E\"{o}tv\"{o}s Lor\'{a}nd University, 
          Department of Algebra and Number Theory,
         1117 Budapest,  P\'{a}zm\'{a}ny P\'{e}ter s\'{e}t\'{a}ny 1/c, Hungary}
\email{bodorb@cs.elte.hu}

\author[K. Kalina]{Kende Kalina }
\email{kkalina@cs.elte.hu}

\author[Cs. Szab\'{o}]{Csaba Szab\'{o}}
\thanks{The research was supported by the Hungarian OTKA K109185 grant.}
\email{csaba@cs.elte.hu}

\subjclass[2010]{Primary 20B27}

\keywords{Infinite permutation group, automorphism group, reduct}

\date{2014.07.28}

\begin{abstract}
Let $\mathbb{F}_2^\omega$ denote the countably infinite dimensional
vector space over the two element field and $\operatorname{GL}(\omega, 2)$ its 
automorphism group. Moreover, let $\operatorname{Sym}(\mathbb{F}_2^\omega)$ denote the
symmetric group acting on the elements of  $\mathbb{F}_2^\omega$. It is shown
that there are exactly four closed subgroups, $G$, such that
$\operatorname{GL}(\omega, 2)\leq G\leq \operatorname{Sym}(\mathbb{F}_2^\omega)$. As $\mathbb{F}_2^\omega$ is
an $\omega$-categorical (and homogeneous) structure, these groups
correspond to the first order definable reducts of
$\mathbb{F}_2^\omega$. These reducts are also analyzed. In the last section the closed groups containing the infinite symplectic group
$\operatorname{Sp}(\omega, 2)$ are classified.
\end{abstract}

\maketitle

\section{Introduction}

Let $\ff_2^\omega$ denote the countably infinite dimensional
vector space over the two element field and $\gl(\omega, 2)$ its 
automorphism group. Moreover, let $\sym(\ff_2^\omega)$ denote the
symmetric group acting on the elements of  $\ff_2^\omega$. In this
paper the closed subgroups of $\sym(\ff_2^\omega)$ containing 
$\gl(\omega, 2)$ are investigated. In the last section this results are extended to closed groups containing the infinite symplectic group
$\operatorname{Sp}(\omega, 2)$. Closure of subgroups means being closed in the topology of pointwise convergence. For an infinite set $\Omega$ a
subgroup  $H\leq \sym(\Omega)$ is closed if the following condition
holds: for every
$\pi \in \sym(\Omega)$ and every finite subset $S\subset \Omega$ if there
is some $\sigma_S \in H$ such that $\sigma_S|_S=\pi|_S$ then  $\pi
\in H$. 

For
finite dimensional projective spaces it is almost independently shown 
in \cite{PB}, \cite{KMcD} and \cite{Pog}  that if $\psl(n,q)$, the
special linear group is contained
in a subgroup $G$ of the symmetric group acting on the points of the
projective space, then $G$ is either the alternating or the full
symmetric group,  or $G$ is contained in the twisted projective linear group.
If we investigate  the infinite version of this theorem, we have to
consider the following:
 The alternating group has an infinite counterpart - the group of finite
support even permutations – but it is not closed. 
As $A_n\geq S_{n-2}$ for every finite integer $n$, the even
permutations on a set of 
size $\omega $ generate all permutations on every finite subset: for
every finite $S\subset \omega$ and $\pi \in \sym (S)$ there is
some $\mu\in \sym(\omega)$ such that $\mu|_{S}=\pi$. Hence the closure
of the subgroup generated by the even permutations in $\sym(\Omega)$
is $\sym(\Omega)$ itself. Similarly, ''$\psl(\omega, q)$'' does not
make sense, also, the closure of the subgroup of $\sym(P(\omega,q))$
generated by the matrices of 
determinant 1 is $\pgl(\omega,q) $.

In this  paper we consider the
countably infinite dimensional vector space, $\ff_2^\omega$, and prove
that if the automorphism group of the
countably infinite
dimensional  vector space, $\gl(\omega,2)$ 
is  contained in a
closed (nontrivial)  subgroup $G$ of the symmetric group
$\sym(\ff_2^\omega)$, then $G$ is either the affine group or
$\sym_0(\ff_2^\omega)$, the stabilizer of $0$.
As for the two element field the linear and projective linear groups
are essentially the same, as a corollary,  we obtain that the
projective linear group is a maximal closed group of $\sym(\ff_2^\omega)$.
We also obtain that the closed groups containing the infinite symplectic group
$\operatorname{Sp}(\omega, 2)$ are exactly the groups containing $\gl(\omega,2)$,
the group $\operatorname{Sp}(\omega, 2)$ itself and the group generated by $\operatorname{Sp}(\omega, 2)$ and the group of translations.

One would expect similar characterisation for all finite fields of prime size.  We show that contrary to the expectations there are several
more such closed subgroups for primes greater than 2.
 We exhibit $ d(p-1)$ many closed supergroups  of $\gl(\omega,
 p)$  for every prime $p>2$, 
where $d(n)$ denotes the number of the divisors of $n$.  This shows
that for fields of larger   size the problem is far more
complicated.  

The vector space $\ff_2^\omega$ is homogeneous in the sense that every
(partial) automorphism between two finite substructures can be extended
to an automorphism of $\ff_2^\omega$. For more details on the model
theoretical background consult~\cite{Hodges}.
Closed supergroups of automorphism groups of homogeneous structures
have special importance.
Closed supergroups of the automorphism group
are in a Galois-connection with the first order definable reducts of
the structure. A relational structure is first order definable from an
other one if it has the same underlying set and can be defined by first order sentences. First order
interdefinability is an equivalence relation, and the classes dually
correspond to the closed subgroups of the full symmetric group
containing the automorphism group of the structure.

For $\omega$-categorical structures this is a bijection between the equivalence classes of reducts and the closed groups containing
the automorphism group of the structure. Since the structures investigated in this paper are $\omega$-categorical the description of the reducts up to
first order interdefinability and the description of the closed groups containing the automorphism group of a given structure is equivalent.

The $\omega$-categoricity can be checked using the following theorem \cite{Hodges}:

\begin{theorem}[Engeler, Ryll-Nardzewski, Svenonius]\label{ERNS}
A countable structure is $\omega$-categorical if and only if its automorphism group is oligomorphic i. e. for every $n$ the automorphism group acting on
the ordered $n$-tuples of the elements of the structure has finitely many orbit.
\end{theorem}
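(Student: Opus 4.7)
The plan is to prove the equivalence by routing through the Stone space $S_n(T)$ of complete $n$-types of $T=\mathrm{Th}(M)$, and establishing, for each $n$, the following chain of equalities/inequalities: the number of $\aut(M)$-orbits on $M^n$, the number of first-order formulas in $n$ free variables up to $T$-equivalence, and the cardinality $|S_n(T)|$ are either all finite and equal, or all infinite.

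For the direction oligomorphic $\Rightarrow$ $\omega$-categorical, I would argue that if $\aut(M)$ has only finitely many orbits on $M^n$, then each orbit is first-order definable (it is an intersection of a finite Boolean combination distinguishing it from the others), whence there are only finitely many formulas in $n$ variables up to $T$-equivalence. Consequently $|S_n(T)|$ is finite, so every complete type over $\emptyset$ is isolated; in particular every type is realized in every model of $T$. A back-and-forth argument on any two countable models of $T$ (matching tuples realizing the same, necessarily isolated, type at each stage) gives that they are isomorphic, proving $\omega$-categoricity.

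For the converse, $\omega$-categorical $\Rightarrow$ oligomorphic, I would reason by contradiction: suppose $S_n(T)$ is infinite for some $n$. As a compact Hausdorff totally disconnected space, $S_n(T)$ either contains a non-isolated point, or is uncountable. By the omitting types theorem one can build a countable model of $T$ that omits a chosen non-isolated type $p$; on the other hand, a countable model realizing $p$ exists by Löwenheim--Skolem applied to a model containing a realization. These two models cannot be isomorphic, contradicting $\omega$-categoricity. Hence every $S_n(T)$ is finite. Using that the unique countable model $M$ is then atomic and therefore $\aut(M)$-homogeneous (any partial elementary map between finite tuples extends to an automorphism, by the standard back-and-forth for atomic models), two $n$-tuples are in the same $\aut(M)$-orbit iff they realize the same type, so the number of orbits equals $|S_n(T)|$, which is finite.

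The main obstacle I expect is the identification of $\aut(M)$-orbits with complete types: this requires the homogeneity of $M$, which in turn rests on the fact that in the countable atomic model every realized type is isolated and any two realizations are linked by an automorphism. Once this homogeneity is established (via back-and-forth using isolation of all types to keep extending partial isomorphisms coherently), both directions of the theorem follow from the dictionary between orbits, formulas, and points of $S_n(T)$ sketched above.
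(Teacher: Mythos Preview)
The paper does not prove this theorem at all: it is quoted as a classical result with a reference to Hodges, \textit{Model Theory}. So there is no proof in the paper to compare against; what you have written is essentially the standard textbook argument.

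Your outline is correct in overall shape, but the forward implication contains a small circularity. You write that if there are finitely many orbits on $M^n$ then ``each orbit is first-order definable (it is an intersection of a finite Boolean combination distinguishing it from the others)''. This presupposes that any two distinct orbits can be separated by a formula, i.e.\ that tuples in different $\aut(M)$-orbits have different types. That is precisely the homogeneity of $M$ which you only establish later, and which can fail in general: a non-homogeneous countable structure may well have two tuples of the same type lying in different $\aut(M)$-orbits. The clean repair is to run the argument in the opposite direction: every definable subset of $M^n$ is $\aut(M)$-invariant and hence a union of orbits; with $k$ orbits there are at most $2^k$ such unions, so at most $2^k$ formulas up to equivalence in $M$, which coincides with $T$-equivalence since $T=\mathrm{Th}(M)$. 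From this point your argument (finitely many $n$-types, all isolated, back-and-forth between any two countable models) goes through unchanged. The converse direction is fine as written.
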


Until recently
only sporadic examples of first order definable reducts were known. In
\cite{cam} the reducts of the dense linear order are determined. For
the random graph and random hypergraphs Thomas has determined their
reducts \cite{thomas1}, \cite{thomas2}. Both the random graph and the
dense linear order have 5 reducts. In \cite{JUZI} it is shown that the
``pointed'' linear order has 116 reducts. Thomas has conjectured that
any homogeneous structure on a finite relational language has finitely many
reducts.  Later in \cite{BPT} and \cite{BP11a} a general technique was
introduced to
investigate first order definable reducts of homogeneous structures on
a finite language. Then several structures
were analyzed from this aspect: the pointed Henson-graphs \cite{pong1},
the random poset \cite{pppsz}, \cite{ppppsz},  equality \cite{BCP} and
the random graph revisited \cite{BP}.

It is argued in \cite{Mac11} that 
the homogeneous vector spaces and the affine spaces cannot be defined
by any finite relational language. Hence our work is a first attempt to classify
first order definable reducts of homogeneous structures on an infinite
relational language. 

We aimed to have our proofs as elementary as possible. All
finitary versions of our theorem and recent results on homogeneous
structures use rather difficult techniques. Although it is tempting
to refer to those results, we kept this paper self-contained in this
sense.

\section{Supergoups of $\operatorname{GL}{(\mathbb{F}_2^\omega)}$ }\label{group}

Let us introduce some notation, first. Let the group $G$ act on the set
$\Omega$. For an element  $s\in \Omega$ let $G_s$ denote the 
stabilizer of $s$ in $G$ and for $s_1,s_2,\dots, s_n$  let
$G_{(s_1,s_2,\dots, s_n )}$ denote the elementwise stabilizer of the
elements.  Let $Aff(\omega,2)$ denote the affine 
space 
obtained  from $\ff_2^\omega$. Let $\aff$ denote the automorphism
group of $Aff(\omega,2)$.  The group $\aff$ is generated by $\gl(\omega,2)$
and the translations. Let $a\in \ff_2^\omega$. The translation by $a$,
denoted by $t_a$ is defined by $v^{t_a}=v+a$. Note that
$\gl(\omega,2)$ along with any translation generates $\aff$.
 We shall
use the elementary facts about linear algebra (as the extendability of
a map from a basis to a linear transformation)  without any reference.

We summarize the necessary information about $\aff$.

\begin{lemma}\label{Rkarakterizal}
The group $\aff$ contains exactly those $f$ permutations of $\sym
(\ff_2^\omega)$ that preserve the ternary addition:
for all $ a, b, c \in
\ff_2^\omega $ the equality $(a+b+c)^f=a^f+b^f+c^f  $ holds, or
alternatively if
$a+b+c+d=0$ we have $a^f+b^f+c^f+d^f=0$. In particular $\aff$ is closed.
\end{lemma}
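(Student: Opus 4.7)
The plan is to prove the two equivalent functional equations characterize $\aff$, and then observe that being defined as the stabilizer of a relation makes $\aff$ automatically closed.

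First, I would handle the trivial direction: every $f \in \aff$ preserves ternary addition. Writing $f$ as $v\mapsto Lv + w$ with $L \in \gl(\omega,2)$ and $w\in \ff_2^\omega$, I compute $a^f+b^f+c^f=L(a+b+c)+3w$, and since we work in characteristic $2$ this equals $L(a+b+c)+w=(a+b+c)^f$. The equivalence of the two formulations of the functional equation is immediate from $a+b+c+d=0 \Leftrightarrow d=a+b+c$ in characteristic $2$, so I would just note this once and work with whichever form is more convenient.

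The main content is the converse. Given a permutation $f$ preserving ternary addition, let $w := 0^f$ and define the auxiliary map $g$ by $v^g := v^f + w$. A short computation (again using $3w=w$ in characteristic $2$) shows that $g$ still preserves ternary addition and additionally satisfies $0^g=0$. Setting $c=0$ in the identity $(a+b+c)^g=a^g+b^g+c^g$ yields $(a+b)^g=a^g+b^g$, so $g$ is an additive bijection of $\ff_2^\omega$. Over the two-element field additivity coincides with $\ff_2$-linearity, hence $g\in \gl(\omega,2)$. Therefore $f = g \circ t_w$ (or $t_w\circ g$, depending on convention) lies in $\aff$, as required.

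Finally, for the ``in particular'' statement: the four-ary relation $R(a,b,c,d) \iff a+b+c+d=0$ is a relation on $\ff_2^\omega$, and the characterization just proved identifies $\aff$ with $\aut(\ff_2^\omega,R)$. The automorphism group of any relational structure on $\Omega$ is closed in $\sym(\Omega)$ in the pointwise topology, because if $\pi\notin \aut$ there is a finite tuple on which $\pi$ fails to preserve $R$, and any $\sigma$ agreeing with $\pi$ on that tuple fails too. I expect no real obstacle here; the only minor care needed is keeping track of characteristic-$2$ identities when translating $f$ to $g$ and back, which is bookkeeping rather than a genuine difficulty.
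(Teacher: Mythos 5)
Your proof is correct and follows essentially the same route as the paper: the easy direction by direct computation, and the converse by translating $f$ to $g$ with $v^g = v^f + 0^f$, verifying $0^g=0$ and additivity of $g$ from the ternary identity, and writing $f$ as $g$ composed with the translation $t_{0^f}$. Your explicit justification of closedness (as the invariance group of the $4$-ary relation $a+b+c+d=0$) is a detail the paper leaves implicit, but it is exactly the intended argument.
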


\begin{proof}

Let $f$ be such that
for all $ a, b, c \in
\ff_2^\omega $ the equality $(a+b+c)^f=a^f+b^f+c^f  $ hold.
Let the map $g $ be defined by
$x^g=x^f+0^f$. 
Then $g\in \gl(\omega, 2)$, because $0^g=0^f+0^f=0$ and $(a+b)^g=(a+b+0)^f+0^f=a^f+b^f+0^f+0^f=a^g+b^g$.
Since $x^f=x^g+0^f$ the permutation $f$ is contained in $\aff$, it is the composition of
the vector space automorphism $g$ and the translation $t_{0^f}$. 

The other direction is obvious as both the elements of
$\gl(\omega,2)$ and the translations preserve the ternary addition.
\end{proof}

The following lemma will be applied several times.

\begin{lemma}\label{kislemma}

Let us assume that   $G \leq \sym(\ff_2^\omega)_0$ and $G$ acts
$n$-transitively on $\ff_2^\omega \setminus \{ 0 \}$.

Moreover, let us assume that for every $(x_1, x_2 \ldots x_k) \in \ff_2^\omega \setminus \{ 0 \}$ finite tuple of elements
and every $ y \in \ff_2^\omega \setminus \left< x_1, x_2 \ldots x_k \right>$ the element $y$ has an infinite orbit in the pointwise stabilizer
of the tuple $(x_1, x_2 \ldots x_k)$ in $G$.

Let $a_1, a_2 \ldots a_n, a_{n+1}
\in \ff_2^\omega \setminus \{ 0 \}$ be distinct elements
such that $a_{n+1} \notin \left<a_1, a_2 \ldots a_n\right>$.
Then there is an $ h \in G$ such that $a_1^h, a_2^h, \ldots, a_n^h,
a_{n+1}^h$ are linearly independent.
\end{lemma}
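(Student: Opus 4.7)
The plan is to combine the $n$-transitivity of $G$ with the infinite-orbit assumption, via a two-stage move: first place the tuple $(a_1,\ldots,a_n)$ in a linearly independent position using $n$-transitivity, and then use the pointwise stabilizer $G_{(a_1,\ldots,a_n)}$ to ensure that the image of $a_{n+1}$ lands outside the span of those images.

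Concretely, I would begin by fixing an arbitrary linearly independent $n$-tuple $(b_1,\ldots,b_n)$ in $\ff_2^\omega$, which exists because the space is countably infinite dimensional. The $n$-transitivity hypothesis provides some $g_0\in G$ with $a_i^{g_0}=b_i$ for $i=1,\ldots,n$. For any $h\in G_{(a_1,\ldots,a_n)}$, the composition $hg_0$ still satisfies $a_i^{hg_0}=(a_i^h)^{g_0}=a_i^{g_0}=b_i$, while sending $a_{n+1}$ to $(a_{n+1}^h)^{g_0}$. Thus, as $h$ varies in the pointwise stabilizer, the first $n$ coordinates of the image remain linearly independent and only the last coordinate moves.

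Now apply the orbit hypothesis to the tuple $(a_1,\ldots,a_n)$ together with $y=a_{n+1}$, which is legal since $a_{n+1}\notin\langle a_1,\ldots,a_n\rangle$. This says that the orbit $\mathcal{O}:=a_{n+1}^{G_{(a_1,\ldots,a_n)}}$ is infinite, and since $g_0$ is a bijection, its image set $\{c^{g_0}:c\in\mathcal{O}\}$ is also infinite. But $\langle b_1,\ldots,b_n\rangle$ has only $2^n$ elements, so some $h\in G_{(a_1,\ldots,a_n)}$ must send $a_{n+1}$ to an element whose $g_0$-image lies outside this span. Setting $g:=hg_0$, the tuple $(a_1^g,\ldots,a_n^g,a_{n+1}^g)$ equals $(b_1,\ldots,b_n,c)$ with $c\notin\langle b_1,\ldots,b_n\rangle$, which is linearly independent.

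There is no serious obstacle here; the argument is essentially a combination of transitivity and pigeonhole against the finite set $\langle b_1,\ldots,b_n\rangle$. The one piece of bookkeeping to get right is the right-action coset calculation above, verifying that among elements of $G$ sending $(a_1,\ldots,a_n)$ to the chosen linearly independent tuple $(b_1,\ldots,b_n)$, the set of possible images of $a_{n+1}$ is precisely the $g_0$-image of $\mathcal{O}$, hence infinite.
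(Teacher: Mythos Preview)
Your proposal is correct and follows essentially the same approach as the paper's proof: use $n$-transitivity to send $(a_1,\ldots,a_n)$ to a linearly independent tuple, then use the infinite-orbit hypothesis together with the finiteness of the span to adjust $a_{n+1}$ via an element of the pointwise stabilizer. The only cosmetic difference is that the paper pulls the finite span $W=\langle a_1^{g_0},\ldots,a_n^{g_0}\rangle$ back through $g_0^{-1}$ and checks $a_{n+1}^h\notin W^{g_0^{-1}}$, whereas you push the orbit $\mathcal{O}$ forward through $g_0$ and compare against $W$; these are equivalent formulations of the same pigeonhole step.
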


\begin{proof}
Since  $G$ acts $n$-transitively on $\ff_2^\omega \setminus \{ 0 \}$ we can
choose an element $g \in G$ such that $a_1^g, a_2^g,  \ldots, a_n^g$ are linearly
independent. Let $W=\left< a_1^g, a_2^g,  \ldots, a_n^g \right>$.
Now, $W^{g^{-1}}$
is a finite set, and the orbit of $a_{n+1}$ in the stabilizer of $\{
a_1, a_2 \ldots a_n \}$ is infinite, hence we can choose 
an $h \in G_{( a_1, a_2 \ldots a_n )}$ such that $a_{n+1}^h \notin
W^{g^{-1}}$. Then $a_i^{hg}=a_i^g$ for $1\leq i\leq n$ and they are
independent, moreover 
$a_{n+1}^{hg}\notin  W$, hence $a_1^{hg}, a_2^{hg},  \ldots, a_n^{hg},
a_{n+1}^{hg}$ are linearly 
independent.
Thus $hg$ satisfies the condition of the
Lemma.
\end{proof}

\begin{theorem}\label{0fix}
Let us assume that   $ \gl(\omega,2) \lneqq G \leq \sym(\ff_2^\omega)_0$ and $G$
is closed. Then
${G} =\sym(\ff_2^\omega)_0$
\end{theorem}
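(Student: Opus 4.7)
The plan is to show by induction on $n$ that $G$ acts $n$-transitively on $\ff_2^\omega \setminus \{0\}$ for every $n$; combined with closedness of $G$ and $G \leq \sym(\ff_2^\omega)_0$, this forces $G = \sym(\ff_2^\omega)_0$. The orbit condition of Lemma~\ref{kislemma} is automatically satisfied for $G$, because $\gl(\omega,2) \subseteq G$ and already the pointwise stabilizer of any finite tuple $(x_1, \ldots, x_k)$ of nonzero elements in $\gl(\omega,2)$ acts transitively on $\ff_2^\omega \setminus \langle x_1, \ldots, x_k \rangle$, which is an infinite set.

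First I would locate a non-linear element $g \in G$: since $G \supsetneq \gl(\omega,2)$ and $\aff \cap \sym(\ff_2^\omega)_0 = \gl(\omega,2)$, there exists $g \in G \setminus \aff$. By Lemma~\ref{Rkarakterizal}, $g$ breaks some ternary sum, so there are distinct nonzero $u, v$ with $g(u), g(v), g(u+v)$ linearly independent; that is, $g$ maps a dependent triple of distinct nonzero elements to a linearly independent triple. Combined with the fact that $\gl(\omega,2)$ is transitive on ordered dependent triples and on ordered linearly independent triples of distinct nonzero elements separately, the element $g$ merges the two $\gl(\omega,2)$-orbits on ordered triples of distinct nonzero elements into a single $G$-orbit, yielding $3$-transitivity of $G$ on $\ff_2^\omega \setminus \{0\}$; the cases $n = 1, 2$ hold already for $\gl(\omega,2)$.

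For the inductive step, assuming $G$ is $n$-transitive on $\ff_2^\omega \setminus \{0\}$ for some $n \geq 3$, I aim at $(n+1)$-transitivity. Since $\gl(\omega,2)$ is transitive on ordered linearly independent $(n+1)$-tuples, it suffices to send every ordered $(n+1)$-tuple $(a_1, \ldots, a_{n+1})$ of distinct nonzero vectors to a linearly independent one by some element of $G$. If some coordinate $a_j$ lies outside $\langle a_i : i \ne j \rangle$, reorder to make $a_{n+1}$ that coordinate and apply Lemma~\ref{kislemma}. The main obstacle is the remaining ``coloop-free'' case, in which every $a_i$ lies in the span of the remaining $a_j$'s; for this case the plan is to combine the non-linearity of $g$, conjugates of $g$ by $\gl(\omega,2)$, and the inductively established $n$-transitivity to produce an element $h \in G$ that fixes $a_1, \ldots, a_n$ pointwise but sends $a_{n+1}$ outside $\langle a_1, \ldots, a_n \rangle$, thereby reducing back to the preceding case. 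Building such an $h$ --- essentially localising the failure of linearity of $g$ to a prescribed finite configuration while keeping the other coordinates under control --- is the delicate step and forms the technical core of the proof.
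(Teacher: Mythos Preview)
Your overall plan (show $n$-transitivity of $G$ on $\ff_2^\omega\setminus\{0\}$ by induction, invoking Lemma~\ref{kislemma} and closedness) and your base case $n\leq 3$ match the paper's proof. The divergence, and the gap, is in the inductive step.

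You split into the case where some $a_j$ is outside the span of the others (handled by Lemma~\ref{kislemma}) and the ``coloop-free'' case, and for the latter you propose to manufacture an $h\in G$ fixing $a_1,\ldots,a_n$ pointwise and pushing $a_{n+1}$ out of $\langle a_1,\ldots,a_n\rangle$. You then explicitly leave this construction open. This is the entire content of the inductive step, so the proposal is incomplete. Moreover, it is not clear your intended construction can be made to work without essentially reproving $(n{+}1)$-transitivity: once $a_1,\ldots,a_n$ are independent, the pointwise stabilizer of these elements in $\gl(\omega,2)$ fixes $\langle a_1,\ldots,a_n\rangle$ elementwise, so ``localising the failure of linearity of $g$'' to move only $a_{n+1}$ while freezing the others is not a small bookkeeping matter.

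The paper's inductive step avoids this entirely by a different reduction. Using $n$-transitivity one first \emph{normalises} so that $a_1,\ldots,a_{n-1}$ are independent and $a_n=a_1+\cdots+a_{n-1}$, and simultaneously fixes an $h\in G$ sending $(a_1,\ldots,a_n)$ to an independent $n$-tuple. If $a_{n+1}\notin\langle a_1,\ldots,a_{n-1}\rangle$, Lemma~\ref{kislemma} finishes. Otherwise $a_{n+1}=\sum_{i<n}\varepsilon_i a_i$ with at least two $\varepsilon_i$ equal to $1$ and at least one equal to $0$; pick $j,k<n$ with $\varepsilon_j=1$, $\varepsilon_k=0$ and let $g\in\gl(\omega,2)$ swap $a_j,a_k$ and fix the remaining $a_i$ ($i<n$). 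Then $h$ and $gh$ send $\{a_1,\ldots,a_n\}$ to the same independent set but send $a_{n+1}$ to two \emph{different} points. If either image lies outside $\langle a_1^h,\ldots,a_n^h\rangle$, Lemma~\ref{kislemma} applies. If both lie inside, one of them, say $a_{n+1}^h=\sum_{i\leq n}\xi_i a_i^h$, has some $\xi_l=0$ (they cannot both have all coefficients $1$), so $a_l^h\notin\langle a_i^h:i\neq l\rangle$ and Lemma~\ref{kislemma} applies after relabelling. No non-linear element is needed beyond the base case; the step is purely combinatorial inside $\gl(\omega,2)$ once $n$-transitivity is in hand.
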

\begin{proof} It is enough to prove that $G$ is $n$-transitive for
every finite $n$. 

At first we show 3-transitivity. The linear
group $\gl(\omega,2)$ acts transitively on the 3-element independent
sets, hence it is enough to show that any three vectors $a,b,c\in \ff_2^\omega$
can be mapped to and independent set. The vectors $a,b,c$ are
dependent exactly if $a+b=c$. The condition  $ \gl(\omega,2) \lneqq
G$ implies that there are $a',b',c'\in \ff_2^\omega$ and $g\in G$ such that
$a'+b'=c'$ and $a'^g+b'^g\neq c'^g $. Now, consider a map $h\in \gl(\omega,2)$
mapping $a,b,c$ to  $a',b',c'$, respectively. The map $hg$ maps
$a,b,c$ to an independent set.

Now, we prove $n$-transitivity by induction. We show that every set of
$n+1$ vectors can be mapped to an independent set.
Let  $a_1, a_2, \ldots, a_n, a_{n+1}
\in \ff_2^\omega \setminus \{ 0 \}$ be dependent distinct elements. By the
$n$-transitivity we may assume that $a_n=a_1+ a_2 +\ldots+ a_{n-1}$ and
there is an $h\in G$ such that  $\{a_i^h|i=1,2,\dots,n\}$ is
a linearly
independent set. If $a_{n+1} \notin \left<a_1,
a_2, \ldots, a_n\right>$, then   by Lemma~\ref{kislemma} we are done. 
If $a_{n+1} \in \left<a_1,
a_2, \ldots, a_n\right>$, then $a_{n+1}=\sum\limits_1^{n-1} \varepsilon_i
a_i $  where at least two, but not all $\varepsilon_i$ are equal to 1. 
Indeed, assume that there is a unique $i$ such that $\varepsilon_i=1$,
then $a_{n+1}=a_i $   would hold, and if all of them were equal to 1,
then
$a_{n+1}=\sum\limits_1^{n-1} 
a_i =a_n$ would hold contradicting that the vectors are distinct.
Let $\varepsilon_j=1$ and $\varepsilon_k=0$  for some $j,k<n$. Then
there is a linear map $g$ flipping $a_j$ and $a_k$ and fixing every
$a_i$, where $i<n$ and $i\neq j,k$. Now,
$\{a_i^h|i=1,2,\dots,n\}=\{a_i^{gh}|i=1,2,\dots,n\}$ is an
independent set and $a_{n+1}^h\neq a_{n+1}^{gh} $. If any of the latter two
elements is not in $\left<a_1^h,
a_2^h, \ldots. a_n^h\right>$ then we are done by
Lemma~\ref{kislemma}. Otherwise we may assume that $
a_{n+1}^h=\sum\limits_1^n \xi_i a_i^h  $, where there is an $l$ such
that $\xi_l=0$. Now, $a_l^h\notin 
\left<a_i^h| 1\leq i\leq n+1, i\neq l\right>$
and we are done again, by Lemma~\ref{kislemma}.
\end{proof}

Now, we consider the case  when 0 is not fixed by $G$

\begin{lemma}\label{affin}
Let us assume that $G\leq \sym(\ff_2^\omega)$, and $G_0=G\cap (\sym(\ff_2^\omega))_0\leq \gl(\omega,2)$. 

Moreover, let us assume that for every $(x_1, x_2, x_3) \in \ff_2^\omega \setminus \{ 0 \}$ finite tuple of elements
and every $ y \in \ff_2^\omega \setminus \left< x_1, x_2, x_3 \right>$ the element $y$ has an infinite orbit in the pointwise stabilizer
of the tuple $(x_1, x_2, x_3)$ in $G$.
Then $G\leq \aff$.
\end{lemma}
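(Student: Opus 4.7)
The plan is to show that every $g\in G$ preserves the relation $x_1+x_2+x_3+x_4=0$; by Lemma~\ref{Rkarakterizal} this forces $g\in\aff$. Fix $g\in G$ and set $a:=0^g$. If $a=0$, then $g\in G_0\le\gl(\omega,2)\le\aff$, so assume $a\ne 0$. A short additivity manipulation reduces the $4$-sum condition to the single instance: for every linearly independent pair $u,v$, one has $(u+v)^g=u^g+v^g+a$. Equivalently, every $2$-dimensional subspace $\{0,u,v,u+v\}$ must be mapped by $g$ onto an affine $2$-flat.

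The central step is a conjugation argument that combines $G_0\le\gl(\omega,2)$ with the orbit hypothesis. Write $U=u^g$, $V=v^g$, $W=(u+v)^g$; for generic $u,v$ these are three distinct nonzero vectors. For any $\sigma\in G_{(U,V,a)}$, the conjugate $h:=g\sigma g^{-1}$ lies in $G$ and fixes the triple $(u,v,0)$: indeed $u^h=(U^\sigma)^{g^{-1}}=U^{g^{-1}}=u$, similarly $v^h=v$, and $0^h=(a^\sigma)^{g^{-1}}=a^{g^{-1}}=0$ because $\sigma$ fixes $a$. Hence $h\in G_0\le\gl(\omega,2)$ is linear, so it also fixes $u+v$; pushing this back through $g$ gives $W^\sigma=W$. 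Thus $W$ is fixed by the entire pointwise stabilizer $G_{(U,V,a)}$, and the orbit hypothesis applied to the triple $(U,V,a)$ of distinct nonzero vectors forces $W\in\langle U,V,a\rangle$. Running the very same argument with the two alternative generating pairs $(u,u+v)$ and $(v,u+v)$ of the $2$-plane yields the companion inclusions $V\in\langle U,W,a\rangle$ and $U\in\langle V,W,a\rangle$.

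Together with the injectivity of $g$ (so that $W\notin\{U,V,a\}$), these three inclusions pin $W$ down to a very short list of candidates. In the degenerate case $a\in\langle U,V\rangle$ one necessarily has $a=U+V$ and then the inclusions already give $W=0=U+V+a$, so the $2$-flat property is automatic. In the generic linearly independent case the candidates reduce to $W\in\{U+V,\ U+V+a\}$, and the desired value is the second one.

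The main obstacle will be ruling out the ``wrong'' value $W=U+V$. I would do this by introducing a third vector $w$ with $u,v,w$ linearly independent and applying the stabilizer-and-orbit argument to each of the seven $2$-planes contained in $\langle u,v,w\rangle$; every such plane produces a relation of the form $(x+y)^g=x^g+y^g+\lambda\,a$ with some $\lambda\in\ff_2$. Computing $(u+v+w)^g$ through three different decompositions, and $(v+w)^g$ through both $(v,w)$ and $(u+v,u+w)$, yields a system of linear consistency equations on these seven ``defect'' scalars; a further invocation of the orbit hypothesis, this time applied to the linearly dependent image triple $(U,V,W)$ in order to produce an element of $G$ that fixes $U,V,W$ but moves $a$, destroys the residual freedom and forces every defect scalar to vanish, giving $W=U+V+a$ as required. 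This combinatorial-plus-orbit exclusion, in which the strength of the hypothesis on \emph{triples} (rather than pairs) is genuinely used, is the delicate part of the proof; the remaining edge cases (those $u,v$ for which one of $U,V$ accidentally equals $0$, because $u$ or $v$ is the preimage of $0$ under $g$) are then handled by replacing $g$ by $g\ell$ for a suitable $\ell\in G_0$ moving this preimage.
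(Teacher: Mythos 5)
Your first half is sound and is essentially the paper's own mechanism in different notation: the conjugation $h=g\sigma g^{-1}$ showing that $W=(u+v)^g$ is fixed by the whole pointwise stabilizer $G_{(U,V,a)}$, hence lies in $\langle U,V,a\rangle$, together with the two companion inclusions, correctly pins $W$ down to $\{U+V,\,U+V+a\}$ in the generic case. The gap is in the exclusion of $W=U+V$, which is the actual crux of the lemma, and your sketch for it does not close. First, the linear consistency system on the seven plane-defects of a $3$-space is genuinely underdetermined: writing $p_1,\dots,p_7$ for the defects, the relations you can extract are $p_1+p_6=p_2+p_5=p_3+p_4$ and $p_7=p_1+p_2+p_3$, and both the all-$0$ and the all-$1$ assignments (and many mixed ones) satisfy them, so no amount of such bookkeeping can force the defects to equal $1$. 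Second, your orbit-hypothesis invocation produces, from $\sigma\in G_{(U,V,U+V)}$ with $a^\sigma\neq a$, an element $h=g\sigma g^{-1}\in G$ fixing $u,v,u+v$ and moving $0$ --- but nothing you have established makes this a contradiction: the hypothesis only controls orbits \emph{outside} $\langle u,v\rangle$, and $0\in\langle u,v\rangle$, so a priori $G_{(u,v,u+v)}$ is free to move $0$. (There is also a sign slip: you want the defects to equal $1$, not to ``vanish.'')

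To make your endgame work you need precisely the lemma that the paper proves first and that your proposal omits: using transitivity of $G$, conjugate an arbitrary injective triple to one of the form $(0,x,y)$, whose stabilizer lies in $G_0\leq\gl(\omega,2)$ and therefore has \emph{exactly one} fixed point outside the triple, namely $x+y$; transporting back, every injective triple $(a,b,c)$ has a unique extra fixed point $F(a,b,c)\in\langle a,b,c\rangle$, and for a dependent triple $(u,v,u+v)$ this forces $F(u,v,u+v)=0$. Only with this in hand does your $h$ contradict anything (the stabilizer of $(u,v,u+v)$ must fix $0$), and only with the ``unique'' part do you know $\sigma$ moving $a$ exists at all. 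The paper then finishes differently --- and more economically --- by evaluating $F(a,b,F(a,b,c))$ in two ways through an element sending $a$ to $0$, obtaining $c$ on one side and an element of $\langle a,b\rangle$ on the other; your defect-scalar computation is a detour that, even repaired, still rests on this same missing ingredient. Finally, two edge cases need genuine care rather than the fix you indicate: if $W=0$ (i.e.\ $u+v=0^{g^{-1}}$) the triple $(U,W,a)$ contains $0$ and the hypothesis does not apply to it, and replacing $g$ by $g\ell$ with $\ell\in G_0$ does not move the preimage of $0$, since $0^{(g\ell)^{-1}}=0^{g^{-1}}$; the quadruples through $0^{g^{-1}}$ are better handled by deriving their $4$-sum instances from the others by additivity.
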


\begin{proof}
Recall that the affine group $\aff$ is the set of
elements of $\sym \left( \ff_2^\omega \right)$ 
preserving the ternary addition, or alternatively, preserving the
4-tuples $(a,b,c,d)$ satisfying $a+b+c+d=0$.

The pointwise stabilizer of an injective $3$-tuple $(0,a,b)$ fixes the element $a+b$ because this stabilizer fixes the $0$
so it is in $\gl$. By the assumption of the lemma this stabilizer does not fix any other elements in $\ff_2^\omega\setminus \{0,a,b\}$. Because of the transitivity this implies that the pointwise stabilizer of any injective $3$-tuple $(a,b,c)$ must have exactly one
fixed point in $\ff_2^\omega\setminus \{a,b,c\}$. Let us denote this fix point by $F(a,b,c)$. By the assumption of the lemma we know that $F(a,b,c)$ must be contained in the subspace $\langle a,b,c \rangle$. In particular it implies that if $c=a+b$ and $a,b,c\neq 0$, then $F(a,b,c)=0$.

	Now, let $a,b,c\in \ff_2^\omega$ be linearly independent elements. Then $F(a,b,c)\in \langle a,b,c \rangle$. We claim that $F(a,b,c)=a+b+c$.
	Suppose not. Then $F(a,b,c)\in \{0,a+b,a+c,b+c\}$. By symmetry we can assume that $F(a,b,c)$ is either 0 or $a+b$.
	In any case $c\not\in \langle a,b,F(a,b,c) \rangle$, hence $c\neq F(a,b,F(a,b,c))$. The group $G$ is transitive, thus there exists a
	permutation $g\in G$ such that $a^g=0$. By the definition of the function $F$ we know that $F(x^g,y^g,z^g)=(F(x,y,z))^g$ holds for any
	injective $3$-tuples $(x,y,z)$. Therefore 
\begin{multline*}
F(a,b,F(a,b,c))=(F(a^g,b^g,F(a^g,b^g,c^g)))^{g^{-1}}=\\=(F(0,b^g,F(0,b^g,c^g)))^{g^{-1}}=(F(0,b^g,b^g+c^g)^{g^{-1}}=(c^g)^{g^{-1}}=c.
\end{multline*}
This is a contradiction, hence $F(a,b,c)$ must be $a+b+c$. We obtained that the group $G$ preserves the ternary addition $x+y+z$, therefore $G\leq \aff$.
\end{proof}

\begin{theorem}\label{0nofix}
Let us assume that   $ \gl(\omega,2) \lneqq G \leq \sym(\ff_2^\omega)$, and $G$
is closed  not fixing 0.  Then
$G =\aff$ or  $G=\sym(\ff_2^\omega)$
\end{theorem}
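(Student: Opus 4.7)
The plan is to analyze the point stabilizer $G_0 := G \cap \sym(\ff_2^\omega)_0$. First observe that $G_0$ is closed (a pointwise limit of permutations fixing $0$ still fixes $0$, and lies in $G$ by closure of $G$) and contains $\gl(\omega,2)$. So either $G_0 = \gl(\omega,2)$, or $\gl(\omega,2) \lneqq G_0$; in the latter case Theorem~\ref{0fix} forces $G_0 = \sym(\ff_2^\omega)_0$. These two subcases will produce $G = \sym(\ff_2^\omega)$ and $G = \aff$, respectively.

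Suppose first $G_0 = \sym(\ff_2^\omega)_0$. Then $G$ acts transitively on $\ff_2^\omega$: $G_0$ is transitive on $\ff_2^\omega\setminus\{0\}$, and some element of $G$ moves $0$ to a nonzero vector (since $G$ does not fix $0$), so every nonzero vector lies in the $G$-orbit of $0$. For any $\pi \in \sym(\ff_2^\omega)$, choose $g \in G$ with $0^g = 0^\pi$; then $g^{-1}\pi \in \sym(\ff_2^\omega)_0 = G_0 \leq G$, so $\pi = g\cdot(g^{-1}\pi) \in G$, giving $G = \sym(\ff_2^\omega)$.

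Now suppose $G_0 = \gl(\omega,2)$. I apply Lemma~\ref{affin}. Its first hypothesis holds with equality. For the infinite-orbit hypothesis, fix $(x_1,x_2,x_3)$ with $x_i \neq 0$ and $y \notin \langle x_1,x_2,x_3\rangle$. The pointwise stabilizer of $(x_1,x_2,x_3)$ in $G$ already contains the corresponding stabilizer inside $\gl(\omega,2)$, and any $y' \notin \langle x_1,x_2,x_3\rangle$ arises as the image of $y$ under some linear automorphism fixing $\langle x_1,x_2,x_3\rangle$ pointwise (extend a basis of $\langle x_1,x_2,x_3,y\rangle$ to a basis of $\ff_2^\omega$, similarly for $y'$, and map basis-to-basis); hence the orbit of $y$ is infinite. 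Lemma~\ref{affin} then yields $G \leq \aff$. Since $\gl(\omega,2) \lneqq G$, pick $g \in G \setminus \gl(\omega,2)$ and write $g = t_a h$ with $h \in \gl(\omega,2)$ and $a \neq 0$; then the translation $t_a = g h^{-1}$ lies in $G$, and since $\gl(\omega,2)$ together with any translation generates $\aff$, we conclude $G = \aff$. The only substantive step is verifying the infinite-orbit hypothesis for the subgroup $\gl(\omega,2) \leq G$; everything else reduces to bookkeeping around the case split, and no major obstacle is anticipated.
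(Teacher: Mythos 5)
Your proposal is correct and follows essentially the same route as the paper: split on whether the point stabilizer $G_0$ is $\gl(\omega,2)$ or (via Theorem~\ref{0fix}) all of $\sym(\ff_2^\omega)_0$, conclude $G=\sym(\ff_2^\omega)$ in the latter case, and in the former apply Lemma~\ref{affin} together with the semidirect-product structure of $\aff$ to get $G=\aff$. You merely make explicit some steps the paper leaves implicit (closedness of $G_0$, verification of the infinite-orbit hypothesis of Lemma~\ref{affin}), which is fine.
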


\begin{proof}
The stabilizer of the $0$ in $G$ is $\gl$ or $\symnull$. If it is $\symnull$ then $G$ must be $\sym$ because $G$ does
not fix the $0$. Assume that the stabilizer of the $0$ in $G$ is $\gl$.
Then the assumptions in Lemma \ref{affin} holds for $G$. So $G$ is a subgroup of $\aff$ containing $\gl$. Using that $\aff$ is a semidirect
product of $T$ and $\gl$, and $\aff$
is generated by $\gl$ and any translation (except the identity), and $G$ cannot be $\gl$ because $G$ does not fix the $0$.
We obtained that in this case $G$ must be the group $\aff$.

\end{proof}

\section{Orbits and reducts}

In Section~\ref{group} we have found the closed supergroups of
$\gl(\omega,2)$ in $\sym(\ff_2^\omega)$. Each of them corresponds to a first
order definable reduct of the vector space $\ff_2^\omega$. 

\begin{corollary} The vector space $\ff_2^\omega$ has 4 first order
definable reducts:
\begin{enumerate}
\item $\ff_2^\omega$, corresponding to the group $\gl(\omega,2)$,
\item The affine space corresponding to $\aff$,
\item The structure, with one unary relation, $\{0\}$ corresponding to
  $\sym_0(\ff_2^\omega)$,
\item The trivial (no relation) structure, corresponding to $\sym(\ff_2^\omega)$.
\end{enumerate}

\end{corollary}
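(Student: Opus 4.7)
The plan is to deduce the classification of reducts from the classification of closed supergroups of $\gl(\omega,2)$ via the Galois correspondence available for $\omega$-categorical structures, so the first step is to verify $\omega$-categoricity. By Theorem~\ref{ERNS} it suffices to show that $\gl(\omega,2)$ acts oligomorphically on $\ff_2^\omega$. For a tuple $(v_1,\dots,v_n)$ its orbit under $\gl(\omega,2)$ is determined by its linear dependency pattern, i.e.\ by the set of $\ff_2$-linear combinations that vanish. There are only finitely many such patterns on $n$ vectors (at most $2^{2^n}$), so the action is oligomorphic.

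Once $\omega$-categoricity is established, the general Galois theory of $\omega$-categorical structures (sketched in the introduction) gives a bijection between the equivalence classes of first-order definable reducts of $\ff_2^\omega$ and the closed subgroups of $\sym(\ff_2^\omega)$ that contain $\aut(\ff_2^\omega)=\gl(\omega,2)$. I would then invoke Theorems~\ref{0fix} and~\ref{0nofix}: any closed $G$ with $\gl(\omega,2)\leq G\leq \sym(\ff_2^\omega)$ either fixes $0$, in which case $G=\gl(\omega,2)$ or $G=\sym_0(\ff_2^\omega)$ by Theorem~\ref{0fix}, or does not fix $0$, in which case $G=\aff$ or $G=\sym(\ff_2^\omega)$ by Theorem~\ref{0nofix}. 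Together with the fact that these four groups are pairwise distinct (e.g.\ $\aff$ is not $n$-transitive for $n\geq 3$ while $\sym_0$ is, and $\sym_0\neq \aff$ since the former fixes $0$ and the latter does not), this gives exactly four possibilities.

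The final step is to exhibit, for each of the four closed groups, a concrete reduct of $\ff_2^\omega$ whose automorphism group is that closed group, which then forces the listed correspondence. For $\gl(\omega,2)$ one takes $\ff_2^\omega$ itself (the graph of addition is preserved exactly by $\gl(\omega,2)$, since these are the additive bijections fixing $0$). For $\aff$ one takes the affine space, i.e.\ the ternary relation $\{(a,b,c,d) : a+b+c+d=0\}$; by Lemma~\ref{Rkarakterizal} its automorphism group is precisely $\aff$. For $\sym_0(\ff_2^\omega)$ one takes the single unary relation $\{0\}$, whose automorphism group clearly is the stabilizer of $0$ in $\sym(\ff_2^\omega)$. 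For $\sym(\ff_2^\omega)$ one takes the empty language.

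The main non-routine point is the oligomorphy calculation and making sure each named reduct really has the claimed automorphism group; once Lemma~\ref{Rkarakterizal} is in hand and Theorems~\ref{0fix},~\ref{0nofix} are applied, the corollary follows immediately from the Galois correspondence.
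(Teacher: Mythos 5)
Your proposal is correct and follows essentially the same route the paper intends: the corollary is presented there as an immediate consequence of Theorems~\ref{0fix} and~\ref{0nofix} via the Galois correspondence for $\omega$-categorical structures sketched in the introduction, and your oligomorphy check and identification of each reduct's automorphism group just make that explicit. (One inessential slip: $\aff$ \emph{is} $3$-transitive, as the paper notes, so your parenthetical distinguishing the four groups should instead use, say, that $\gl$ and $\sym_0$ fix $0$ while $\aff$ and $\sym(\ff_2^\omega)$ do not, together with Lemma~\ref{Rkarakterizal}; the groups are pairwise distinct in any case.)
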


Each reduct is homogeneous by looking at its automorphism group. On
the other hand, as we mentioned earlier, neither $\ff_2^\omega$, nor
the affine space are homogeneous on a finite language. As $\aff$ is 3
transitive, the first interesting problem is to describe the 4-types in the affine space.

\begin{lemma}\label{elo} The affine group $\aff$ has the following orbits on
the 4-tuples of the affine space:
\begin{enumerate}
\item $(a,a,a,a)$ for any $a\in \ff_2^\omega$
\item $(a,a,a,b)$ for any $a,b\in \ff_2^\omega$, where $a\neq b$ and
  all its cyclic 
  permutations. 
\item $(a,a,b,b)$ for any $a,b\in \ff_2^\omega$, where $a\neq b$ 
\item $(a,b,a,b)$ for any $a,b\in \ff_2^\omega$,  where $a\neq b$
\item $(a,b,b,a)$ for any $a,b\in \ff_2^\omega$, where $a\neq b$ 
\item  $(a,a,b,c)$ for any $a,b,c\in \ff_2^\omega$, where
  $|\{a,b,c\}|=3$, and all six permutations of this tuple.
\item\label{ind}  $(a,b,c,d)$ for any $a,b,c,d\in \ff_2^\omega$, where
  $|\{a,b,c,d\}|=4$, and $a+b+c+d\neq 0$.
\item\label{dep}  $(a,b,c,d)$ for any $a,b,c,d\in \ff_2^\omega$, where
  $|\{a,b,c,d\}|=4$, and $a+b+c+d= 0$. 
\end{enumerate}

\end{lemma}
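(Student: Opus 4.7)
The plan is to identify the invariants of $\aff$ on ordered $4$-tuples and then show they are complete. By Lemma~\ref{Rkarakterizal}, any element of $\aff$ preserves the relation $a+b+c+d=0$; being a group of permutations, it automatically preserves the equality pattern of the components. These two invariants already separate the eight cases in the statement, so tuples from different cases lie in different orbits; in particular cases (\ref{ind}) and (\ref{dep}) are distinguished solely by the sum relation.

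For the converse direction, I would exploit that $\aff$ contains all translations. Given any $4$-tuple, translating a designated coordinate to $0$ reduces the problem to showing that the point stabilizer $\gl(\omega,2)$ acts transitively on each resulting linear configuration. The key fact from $\ff_2$-linear algebra is that two distinct nonzero vectors are automatically linearly independent, and three distinct nonzero vectors are linearly dependent precisely when their sum vanishes; hence the equality pattern together with the $a+b+c+d=0$ invariant pins down the linear structure of the translated tuple.

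With this reduction the cases become routine applications of the transitivity of $\gl(\omega,2)$ on ordered linearly independent tuples: cases (1)--(5) reduce to configurations using at most one nonzero vector (suitably placed and repeated); case (6) reduces to $(0,0,v,w)$ with $\{v,w\}$ linearly independent; case (\ref{ind}) reduces to $(0,b,c,d)$ with $b,c,d$ linearly independent (since they are distinct, nonzero, and $b+c+d\neq 0$); and case (\ref{dep}) reduces to $(0,b,c,b+c)$ with $b,c$ linearly independent. The only real obstacle is bookkeeping: verifying that the list is exhaustive, which amounts to enumerating the equality patterns of a $4$-tuple and checking that the sum invariant genuinely refines the orbit structure only in the all-distinct case.
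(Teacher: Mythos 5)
Your proof is correct, and for the two nontrivial cases it takes a slightly different (and arguably cleaner) route than the paper. The paper also begins by disposing of the first six items via $3$-transitivity of $\aff$, but for the all-distinct tuples it first uses $3$-transitivity to put $(a,b,c)$ into linearly independent position and then runs a case analysis on the location of $d$, invoking Lemma~\ref{kislemma} to absorb the subcase $d=a+b$ into orbit~(\ref{ind}). You instead translate one coordinate to $0$ and reduce everything to the transitivity of $\gl(\omega,2)$ on ordered linearly independent tuples, using the $\ff_2$-specific observations that distinct nonzero vectors are independent in pairs and that three distinct nonzero vectors are dependent exactly when their sum vanishes; this makes the normal forms $(0,b,c,d)$ with $b,c,d$ independent versus $(0,b,c,b+c)$ completely explicit and avoids Lemma~\ref{kislemma} altogether. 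Your treatment of the separating invariants (equality pattern plus the $4$-ary sum relation from Lemma~\ref{Rkarakterizal}) matches what the paper leaves implicit, and your remark that the sum invariant only refines the all-distinct pattern is the right sanity check. What each approach buys: the paper's version recycles machinery already set up for Theorem~\ref{0fix}, while yours is self-contained linear algebra and generalizes more transparently should one want the $n$-tuple version.
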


\begin{proof}
The first six items follow from the 3-transitivity of $\aff$. For items
\eqref{ind} and \eqref{dep} let $a,b,c,d$ be distinct elements of the
affine space. 
Again, by the 3-transitivity of $\aff$ we may assume that $a,b,c$ are
linearly independent. If $d\notin\langle a,b,c   \rangle$, then
$(a,b,c,d)$ belongs to case \eqref{ind}. 
If $d\in\langle a,b,c   \rangle$, then as $a,b,c,d$ are distinct
either $d$ is the sum of two or the sum of three elements. In the
first case we may assume that $a+b=d$. Then $\langle
a,b\rangle=\{a,b,d,0\}$ and so $c\notin \langle a,b,d\rangle$. Then, by
Lemma~\ref{kislemma} $(a,b,c,d)$ belongs to case \eqref{ind}. 
In the second case $a+b+c=d$, or equivalently $a+b+c+d=0$, hence we
are in case \eqref{dep}.
\end{proof}

And now we present an example of a reduct of $\ff_q^\omega$ 
for $q>2$.

\begin{example}\label{pld1}
Let $H< \ff_q^*$, a subgroup of the multiplicative group of
$\ff_q$ and let $|H|=k$. Note that $H$ is cyclic and  $k|p-1$.
As $H$ is a subgroup of the
multiplicative group, every $h\in H$ acts by multiplication on
$\ff_q^\infty$. For convenience we shall write $h\cdot v$ instead of
$v^h$ to distinguish the action of $H$ on $\ff_q^\infty$.
Define the relation $\sim_H$ on
$\ff_q^\infty\setminus \{0\}$ in the 
following way:
{ for  } $a,b\in\ff_q^\omega \setminus \{0\}$ { let } $a\sim_H b$ 
{ if  there is an } $h\in H$ { such that } $h\cdot a=b   $. The relation
$\sim_H$  is an equivalence relation and every $\sim_H$ class is
contained in a 1-dimensional subspace of $ \ff_q^\omega$. Let
$\mathcal V$ denote the set of equivalence classes of $\sim_H$. 

Now, we define the subgroup  $G<\sym(\ff_q^\omega )$ as the set of permutations which preserve
the relation $\sim_H$. This group acts on $\mathcal V$ as the symmetric group, so it is not the
group $\gl(\omega, q)$, and preserves a nontrivial relation, so it can not be $\sym(\ff_q^\omega)$ either.  
\end{example}

Hence, finding the closed supergroups of $\gl(\omega, q)$ for $q>2$
will require different techniques.

\begin{problem}
Find the closed groups $G$ satisfying $\gl(\omega,q) \leq G \leq \sym
(\ff_q^\omega)$. Equivalently: find the fist order definable reducts
of the vector space $\ff_q^\omega$.
\end{problem}

\section{Vector space endowed with a symplectic bilinear product}

In this section we will investigate the closed groups containing the infinite dimensional counterpart of the finite
symplectic groups.

Fra\"iss\'e's theorem states that every homogeneous structure can be obtained as the Fra\"iss\'e-limit of its age (the class
of its finitely generated substructures) \cite{Frass}.
We would like to define the structure $\mathbb{F}_2^\omega(+,\cdot)$ as the Fra\"{i}ss\'{e}-limit of finite
dimensional vector spaces over $\mathbb{F}_2$ endowed with a symplectic bilinear form $\cdot$.
The problem with this definition is that if we enrich the structure of a vector space
with a symplectic bilinear form, then it is no longer a first order structure.
To get around this problem we can add the binary relations $P_i(x,y)$
to the vector spaces $\ff_2^n$ which will express $x\cdot y=i$ $(i=0,1)$.
In this case the automorphisms of this structure are exactly those vector space automorphisms
which preserve the symplectic bilinear product $\cdot$.

\begin{definition}\label{age}
Let $\mathcal{F}$ denote the class of finite dimensional vector spaces over $\mathbb{F}_2$
with binary relations $P_0,P_1$ for which the following axioms hold:
\begin{enumerate}
\item $\forall x,y(P_0(x,y)\leftrightarrow \neg P_1(x,y))$
\item For all $i,j\in \mathbb{F}_2$ the formula $\forall x,y,z (P_i(x,z)\wedge P_j(y,z)\rightarrow P_{i+j}(x+y,z))$.
\item For all $i,j\in \mathbb{F}_2$ the formula $\forall x,y,z (P_i(x,y)\wedge P_j(x,z)\rightarrow P_{i+j}(x,y+z))$.
\item $\forall x (P_0(x,x))$.
\end{enumerate}
\end{definition}

Note that non-degenerateness is not required.
In \cite{Frass} a Fra\"{i}ss\'{e}-class is defined as a class of finitely generated structures satisfying the three
properties described below. For every such class Fra\"iss\'e's theorem guarantees the existence of a countable homogeneous structure such that
its class of finitely generated substructures is precisely the given class. The three required properties:
\begin{itemize}
\item{(HP)} Hereditary property: if a structure $S$ is in $\mathcal{F}$ then every finitely generated substructure of $S$ is in
$\mathcal{F}$
\item{(JEP)} Joint embedding property: if $S_1$ and $S_2$ are two structures from $\mathcal{F}$ then there is a structure $S_3$
in $\mathcal{F}$ such that both $S_1$ and $S_2$ can be embedded into it.
\item{(AP)} Amalgamation property: if $S_1, S_2$ and $S_3$ are three structures from $\mathcal{F}$ and $\phi_1$ and $\phi_2$
are embeddings of $S_3$ into $S_1$ and $S_2$ respectively then there exist a structure $S_4$ in $\mathcal{F}$
and embeddings $\psi_1 : S_1 \rightarrow S_4$ and $\psi_2 : S_2 \rightarrow S_4$ such that the embeddings $\phi_1 \circ \psi_1$
and $\phi_2 \circ \psi_2$ are the same ($\phi_1 \circ \psi_1=\phi_2 \circ \psi_2$ embeds $S_3$ into $S_4$).
\end{itemize}
These three properties are satisfied by $\mathcal{F}$.

Now the definition of $\mathbb{F}_2^\omega(+,\cdot)$ is as follows.

\begin{definition}
Let $\mathcal{V}=\mathbb{F}_2^\omega(+,\cdot)$ be the Fra\"{i}ss\'{e}-limit of the class $\mathcal{F}$ defined in Definition \ref{age}.
We will define the function $\cdot:\mathcal{V}\times \mathcal{V} \rightarrow \ff_2$ as follows:
$$x\cdot y=i \text{ iff } P_i(x,y) \text{ holds}.$$
\end{definition}

Because of the axioms in Definition \ref{age}
the function $\cdot$ is well-defined and is a non-degenerate symplectic bilinear form on $\mathbb{F}_2^\omega.$
Moreover, the automorphism group of $\mathbb{F}_2^\omega(+,\cdot)$ as a first order structure is the
group of those vector space automorphisms which preserve the symplectic bilinear product $\cdot$. 

\begin{proposition}
The structure $\mathcal{V}=\mathbb{F}_2^\omega(+,\cdot)$ is homogeneous, and $\omega$-categorical.
\end{proposition}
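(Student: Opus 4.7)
The plan is to handle the two claims separately: homogeneity is essentially a built-in feature of the Fraïssé construction, while $\omega$-categoricity will be obtained from the Engeler--Ryll-Nardzewski--Svenonius criterion (Theorem~\ref{ERNS}) by verifying that $\aut(\mathcal{V})$ is oligomorphic.

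For homogeneity, I would simply appeal to Fraïssé's theorem. The class $\mathcal{F}$ in Definition~\ref{age} was just observed to satisfy (HP), (JEP) and (AP), so its Fraïssé-limit $\mathcal{V}$ exists, is countable, and is homogeneous (with age equal to $\mathcal{F}$). Homogeneity here means that every isomorphism between two finitely generated substructures of $\mathcal{V}$ extends to an automorphism of $\mathcal{V}$, and this is precisely part of Fraïssé's conclusion; no further computation is required.

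For $\omega$-categoricity, I would show that for every $n$ the group $\aut(\mathcal{V})$ has only finitely many orbits on $n$-tuples, and then invoke Theorem~\ref{ERNS}. Using homogeneity, the orbit of an $n$-tuple $(v_1,\dots,v_n)$ is determined by the isomorphism type, as a member of $\mathcal{F}$, of the substructure it generates, together with the positions of $v_1,\dots,v_n$ inside it. This isomorphism type is pinned down by two finite pieces of data: first, the set of $\ff_2$-linear dependencies among $v_1,\dots,v_n$, i.e.\ the subspace $\{(\varepsilon_1,\dots,\varepsilon_n)\in\ff_2^n : \sum_i\varepsilon_i v_i=0\}$ of $\ff_2^n$, of which there are only finitely many; and second, the Gram matrix $(v_i\cdot v_j)_{1\le i,j\le n}\in\ff_2^{n\times n}$, of which there are only $2^{n^2}$. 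The axioms of Definition~\ref{age} guarantee that the bilinear form on the generated subspace is completely determined by the values on the generators, so these two items really do determine the substructure up to isomorphism.

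The only step that needs any care is the passage from ``same linear dependencies and same Gram matrix on the tuple'' to ``substructures are isomorphic via a map sending tuple to tuple'': one extends the obvious bijection between the generators to a linear map between the generated subspaces using the linear-dependency data, and then the bilinearity axioms force this linear map to preserve $P_0,P_1$. Once this is in place the orbit count for each $n$ is finite, so $\aut(\mathcal{V})$ is oligomorphic and $\mathcal{V}$ is $\omega$-categorical by Theorem~\ref{ERNS}. I do not foresee a genuine obstacle here; the work is bookkeeping rather than ideas.
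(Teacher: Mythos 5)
Your proposal is correct and follows essentially the same route as the paper: homogeneity straight from Fra\"iss\'e's theorem, and $\omega$-categoricity via Theorem~\ref{ERNS} by counting isomorphism types of $n$-generated substructures (you merely make explicit, via the linear-dependency data and the Gram matrix, why that count is finite). No issues.
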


\begin{proof}
The homogeneity is guaranteed by Fra\"iss\'e's theorem. The $\omega$-categoricity is equivalent to the oligomorphity of the
automorphism group \ref{ERNS}. The automorphism group will be oligomorphic because for every $n$ there are finitely
many (possibly $0$) isomorphism types of finitely generated substructures, and by homogeneity two isomorphic substructures lie in
the same orbit of the automorphism group.
\end{proof}

We will denote the automorphism group of the structure $\mathcal{V}=\mathbb{F}_2^\omega(+,\cdot)$ by $\aut(\mathbb{F}_2^\omega(+,\cdot))=\symp$

\begin{proposition}\label{generic}
If $a_1,a_2,\ldots a_n\in \mathcal{V}$ are linearly independent, then for all $i_1,i_2,\ldots i_n\in \mathbb{F}_2$
there exists an element $w\not\in \langle a_1,\dots a_n \rangle$ in $\mathcal{V}$ such that
$a_j\cdot w=i_j$ for all $j=1,2,\dots ,n$.
\end{proposition}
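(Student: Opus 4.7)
The plan is to realize the prescribed inner product values by invoking the standard extension property of the Fra\"iss\'e limit $\mathcal{V}$: given any finitely generated substructure $A\subseteq \mathcal{V}$ and any extension $A\subseteq B$ with $B\in\mathcal{F}$, the inclusion $A\hookrightarrow\mathcal{V}$ extends to an embedding $B\hookrightarrow\mathcal{V}$. This is a direct consequence of the homogeneity of $\mathcal{V}$ together with the fact that its age is exactly $\mathcal{F}$.

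\textbf{Step 1: the substructure.} Let $V=\langle a_1,\dots,a_n\rangle\subseteq\mathcal{V}$, equipped with the restrictions of the predicates $P_0,P_1$. Since $V$ is a finite dimensional $\mathbb{F}_2$-subspace with the inherited symplectic form, $V\in\mathcal{F}$.

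\textbf{Step 2: the abstract one-point extension.} Form $V'=V\oplus \mathbb{F}_2 w$ by adjoining a formal new vector $w$, and extend the symplectic form from $V$ to $V'$ by declaring
$a_j\cdot w:=w\cdot a_j:=i_j$ for $j=1,\dots,n$ and $w\cdot w:=0$, then extending bilinearly; the corresponding predicates $P_0,P_1$ on $V'$ are defined by $P_i(x,y)\iff x\cdot y=i$. It must be checked that $V'\in\mathcal{F}$, i.e.\ that the four axioms of Definition \ref{age} are satisfied. Bilinearity is built into the construction, and the alternating axiom extends to $V'$: for any $v=u+cw$ with $u\in V$ and $c\in\mathbb{F}_2$ one computes $v\cdot v=u\cdot u+2c(u\cdot w)+c^2(w\cdot w)=0$ in characteristic $2$.

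\textbf{Step 3: transfer back to $\mathcal{V}$.} By the extension property of the Fra\"iss\'e limit, the inclusion $V\hookrightarrow \mathcal{V}$ extends to an embedding $\iota:V'\to \mathcal{V}$ that fixes $V$ pointwise. Set $w:=\iota(w)\in \mathcal{V}$. Since $\iota$ is injective and the formal $w$ is not in $V$, we have $w\notin\langle a_1,\dots,a_n\rangle$; since $\iota$ preserves the relations $P_i$, we get $a_j\cdot w=i_j$ for every $j$.

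The only step where anything needs to be verified is Step 2 — checking that the prescribed one-point extension genuinely lies in $\mathcal{F}$. This is routine, but it is precisely the place where working over $\mathbb{F}_2$ matters: characteristic $2$ makes the alternating axiom automatic on $V'$ no matter how the $i_j$ are prescribed, so there is no obstruction to choosing the inner products freely.
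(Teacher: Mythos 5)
Your proof is correct and follows essentially the same route as the paper: both build the abstract one-point extension of $\langle a_1,\dots,a_n\rangle$ with the prescribed products, verify it lies in $\mathcal{F}$, and realize it inside $\mathcal{V}$ over the given tuple. The only cosmetic difference is that you invoke the extension property of the Fra\"iss\'e limit directly, whereas the paper derives it by hand (embed an abstract copy somewhere in $\mathcal{V}$, then use homogeneity to move it onto $a_1,\dots,a_n$); your explicit check that the alternating axiom survives the extension in characteristic $2$ is a welcome detail the paper leaves implicit.
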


\begin{proof}
This is a special case of the extension property of homogeneous structures.
Let $b_1, b_2 \ldots b_{n+1}$ be a base of the vector space $\ff_2^{n+1}$. We will define a $\cdot$ bilinear form on
$\ff_2^{n+1}$ to do so is enough to define the values $b_j \cdot b_k$ for all possible pairs of base elements. Let
$b_j \cdot b_k$ be
\begin{itemize}
\item $0$ if $j=k$
\item $0$ if $j\neq k$, $j\leq n$, $k \leq n$ and $P_0(a_j,a_k)$ holds in $\mathcal{V}$
\item $1$ if $j\neq k$, $j\leq n$, $k \leq n$ and $P_1(a_j,a_k)$ holds in $\mathcal{V}$
\item $x$ if $j \leq n$, $k=n+1$ and $i_j=x$
\item $x$ if $k \leq n$, $j=n+1$ and $i_k=x$
\end{itemize}
We define the relations $P_0$ and $P_1$ on $\ff_2^{n+1}$ as usual, and this yields a structure from the class $\mathcal{F}$
 \ref{age}. This structure will be denoted by $F_1$. The structure $\mathcal{V}$ has a substructure isomorphic to $F_1$:
 this substructure will be denoted by $F_2$. Denote the substructure of $\mathcal{V}$ generated by the elements
 $a_1,a_2,\ldots a_n\in \mathcal{V}$ by $F_3$. Denote the substructure of $F_2$ (and thus of $\mathcal{V}$) generated
 by the elements $\phi(b_1), \phi(b_2) \ldots \phi(b_{n})$ where $\phi$ is an isomorphism from $F_1$ to $F_2$ by $F_4$.
 Then $F_2$ and $F_4$ are isomorphic so there is a $\psi$ automorphism of $\mathcal{V}$ extending the isomorphism between them.
 This $\psi$ maps $F_2$ onto a substructure of $\mathcal{V}$ containing $F_3$: the element $\psi(\phi(b_{n+1}))$ will be a good
 choice for $w$. 
\end{proof}

\begin{proposition}\label{autp_0}
$\symp=\aut(P_0)$.
\end{proposition}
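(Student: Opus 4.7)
The inclusion $\symp \subseteq \aut(P_0)$ is immediate: every element of $\symp$ preserves the bilinear form and hence each of the relations $P_0, P_1$. The content is the reverse inclusion, and I would prove it by showing that the bilinear form $\cdot$, and in particular the vector space addition, is already first order definable from the single relation $P_0$.

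The first ingredient is the non-degeneracy of $\cdot$, which follows from Proposition \ref{generic} applied to the one-element independent tuple $(a)$ with $i_1 = 1$: for every $a \neq 0$ there exists $w$ with $a \cdot w = 1$. Consequently, $0$ is the \emph{unique} element $z$ satisfying $\forall y\, P_0(z,y)$. Any $f \in \aut(P_0)$ preserves this first order property, so $f(0) = 0$.

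The second ingredient is the identity $(x+y)\cdot w = x\cdot w + y \cdot w$ in $\ff_2$, which rewrites as
\[
P_0(x+y, w) \; \leftrightarrow \; \bigl(P_0(x,w) \leftrightarrow P_0(y,w)\bigr)
\]
for every $w$. By non-degeneracy, the element $x+y$ is the unique $z \in \mathcal{V}$ making this biconditional hold for all $w$: indeed, if $z_1$ and $z_2$ both satisfy it, then $z_1 + z_2$ is orthogonal to every $w$, hence $z_1 + z_2 = 0$. Now let $f \in \aut(P_0)$ and fix $x, y \in \mathcal{V}$. Applying $f$ to the defining property of $x+y$ and using that $f$ is a $P_0$-preserving bijection (so $f(w)$ ranges over all of $\mathcal{V}$), we obtain
\[
\forall w'\colon\; P_0\bigl(f(x+y), w'\bigr) \; \leftrightarrow \; \bigl(P_0(f(x), w') \leftrightarrow P_0(f(y), w')\bigr).
\]
The unique element satisfying this characterization with respect to $f(x)$ and $f(y)$ is $f(x)+f(y)$, so $f(x+y) = f(x) + f(y)$.

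Thus $f$ is an $\ff_2$-linear bijection (scalar multiplication over $\ff_2$ being automatic) that fixes $0$, and since it preserves $P_0$ it preserves the bilinear form $\cdot$. Hence $f \in \symp$. I do not anticipate a real obstacle here; the only step requiring care is the verification that non-degeneracy of $\cdot$ really follows from the Fra\"{i}ss\'{e} extension property (Proposition \ref{generic}), which is what rules out the possibility of a nonzero element that looks like $0$ from the perspective of $P_0$ alone.
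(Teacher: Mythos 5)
Your proof is correct, and its overall strategy is the same as the paper's: reduce the reverse inclusion to showing that $0$ and the addition are first order definable from $P_0$ alone (preservation of the form then being immediate from $P_1=\neg P_0$). The difference is in the defining formula for addition. The paper characterizes $x+y=z$ for nonzero, pairwise distinct elements by $z\neq x\wedge z\neq y\wedge\forall w\bigl((P_0(w,x)\wedge P_0(w,y))\rightarrow P_0(w,z)\bigr)$, which forces a case analysis to patch in the degenerate cases ($x=y$, or one of the elements equal to $0$), and whose correctness invokes Proposition~\ref{generic} to produce a $w$ orthogonal to $x$ and $y$ but not to $z$ whenever $z\notin\langle x,y\rangle$. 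You instead use the uniform biconditional $\forall w\,\bigl(P_0(z,w)\leftrightarrow(P_0(x,w)\leftrightarrow P_0(y,w))\bigr)$, which holds for $z=x+y$ by bilinearity and pins $z$ down uniquely because two solutions would differ by a vector orthogonal to everything. This buys you two small things: no case split at all, and a weaker input --- you only need non-degeneracy (the $n=1$ instance of Proposition~\ref{generic}) rather than the extension property for a pair of vectors. Both arguments are sound; yours is the tidier of the two.
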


\begin{proof}
The inclusion "$\subset$" is obvious. For the other containment, we have to show that the binary function $+$ and the
relation $P_1$ are first order definable from the relation $P_0$. The latter is obvious since
$P_1(x,y)\Leftrightarrow \neg P_0(x,y)$. We know that $\cdot$ is non-degenerate, hence $x=0$ holds if and only
if $\forall y(P_0(x,y))$ holds. This implies that 0 is definable from $P_0$. Now, we claim that for all
$x,y,z\in \mathcal{V}\setminus \{0\}$ the equality $x+y=z$ holds if and only if
$$z\neq x\wedge z\neq y\wedge \forall w((P_0(w,x) \wedge P_0(w,y))\rightarrow P_0(w,z))$$ holds.

At first, assume $x,y,z\in \mathcal{V}\setminus 0$ and $x+y=z$. Then $z$ is not equal to $ x$ or $y$
and if $w\cdot x=w\cdot y=0$,
then $w\cdot z=w\cdot x+w\cdot y=0$. Now assume that $x,y,z\in \mathcal{V}\setminus 0$ and $z$ is not equal to
$x$ or $y$ or $x+y$.
Then $z\not\in \langle x,y \rangle$, thus by Lemma \ref{generic} it follows that there exists a $w\in \mathcal{V}$
such that $w$ is orthogonal to $x,y$ but it is not orthogonal to $z$. Therefore
$\forall w((P_0(w,x) \wedge P_0(w,y))\rightarrow P_0(w,z))$ does not hold. 

Now, the relation $x+y=z$ can be defined in general as follows: 
\begin{multline*}
x+y=z\Leftrightarrow (x=y \wedge z=0) \vee (y=z \wedge x=0) \vee (z=x \wedge y=0) \vee \\
\vee (x\neq 0 \wedge y\neq 0 \wedge z\neq 0 \wedge z\neq x \wedge z\neq \\
y \wedge \forall w((P_0(w,x) \wedge P_0(w,y))\rightarrow P_0(w,z)).
\end{multline*}
As we have seen, the relation $x=0$ is definable from $P_0$, thus this gives us a first order definition of $+$
from the relation $P_0$.
\end{proof}
 
First we will deal with the groups fixing the $0$.
 
Let $\mathcal{B}$ denote a basis of the vector space $\mathcal{V}$.
Then the relation $P_0$ defines a graph $\mathcal{G}$ on the domain $\mathcal{B}$:
two elements of $\mathcal{B}$ will be connected with an edge if and only if
$P_1(x,y)$ holds (that is $P_0(x,y)$ does not hold).
It is easy to see that in this case $\mathcal{G}$ is an undirected graph without loops. 

\begin{proposition}
We can choose the basis $\mathcal{B}$ in such a way that the graph $\mathcal{G}$ defined as above will be isomorphic
to the random graph.
\end{proposition}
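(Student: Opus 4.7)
The plan is to construct the basis $\mathcal{B}$ in stages, building an ascending chain of finite linearly independent sets $B_0 \subseteq B_1 \subseteq \cdots$ in $\mathcal{V}$ whose union is $\mathcal{B}$. The construction must accomplish two things simultaneously: (i) $\mathcal{B}$ must span $\mathcal{V}$, so that it really is a basis, and (ii) the graph $\mathcal{G}$ on $\mathcal{B}$ must satisfy the extension axiom of the random graph, namely that for every pair of disjoint finite subsets $A, B \subseteq \mathcal{B}$ there is some $w \in \mathcal{B} \setminus (A \cup B)$ with $w \cdot a = 1$ for all $a \in A$ and $w \cdot b = 0$ for all $b \in B$. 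Since $\mathcal{B}$ is automatically countable, property (ii) characterizes $\mathcal{G}$ as the random graph up to isomorphism.

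Fix an enumeration $v_1, v_2, \ldots$ of $\mathcal{V} \setminus \{0\}$ and write $\mathcal{B} = \{b_1, b_2, \ldots\}$ for the basis elements in the order in which they are added. Fix also an enumeration $T_1, T_2, \ldots$ of all pairs $(A, B)$ of disjoint finite subsets of $\mathbb{N}$, each interpreted as a future request for an extension with respect to the indices in $A \cup B$. The steps alternate between two types, dovetailed so that every $v_k$ and every $T_m$ is eventually processed. In an \emph{absorption} step we take the next $v_k$: if $v_k \in \langle B_n \rangle$ we do nothing, otherwise we append $v_k$, setting $B_{n+1} = B_n \cup \{v_k\}$. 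These steps force $\langle \mathcal{B} \rangle = \mathcal{V}$. In an \emph{extension} step we take the next unhandled $T_m = (A, B)$; we postpone if $A \cup B$ is not yet contained in the indices of $B_n$, and otherwise apply Proposition~\ref{generic} to the linearly independent set $B_n$, prescribing $b_j \cdot w = 1$ for $j \in A$, $b_j \cdot w = 0$ for $j \in B$, and $b_j \cdot w = 0$ for all remaining indices. The proposition yields an element $w \notin \langle B_n \rangle$, and we append it as the next basis vector; this $w$ is then the witness required by $(A, B)$ in $\mathcal{G}$.

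Thus $\mathcal{B} = \bigcup_n B_n$ is by construction linearly independent and spans $\mathcal{V}$, so it is a basis; and $\mathcal{G}$ satisfies the random graph extension axiom because every task $(A, B)$ eventually gets serviced once its indices have been filled in. The essential mathematics is concentrated in Proposition~\ref{generic}, which supplies precisely the extension property of $\mathcal{V}$ needed to realize the random graph on a basis. The only real subtlety is the scheduling: extension tasks may have to wait until their indices appear among the basis elements, but this is easily arranged by a routine diagonal interleaving (for example, at stage $n$ perform one absorption of $v_n$ and then service all pending tasks $T_m$ with $m \le n$ whose indices now lie in the current $B_n$). This bookkeeping is the only hurdle, and it is a minor one.
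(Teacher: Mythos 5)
Your proof is correct, and it reaches the conclusion by a somewhat different route than the paper. The paper fixes a concrete copy $R$ of the random graph with an enumeration of its vertices and runs an explicit back-and-forth: at even stages it takes the next vertex of $R$ and realizes its adjacency pattern inside $\mathcal{V}$ via Proposition~\ref{generic}, and at odd stages it adjoins the next element of $\mathcal{V}$ outside the current span and matches it in $R$ using the extension property of the random graph; the union of these partial isomorphisms is an isomorphism $\mathcal{G}\cong R$, and the union of the independent sets is a basis. You instead build the basis ``forth only'': absorption steps guarantee spanning, extension steps use Proposition~\ref{generic} to plant a witness for each task $(A,B)$, and you then invoke the standard fact that a countably infinite loopless graph satisfying the extension axioms is isomorphic to the random graph. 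Both arguments rest on the same key input (Proposition~\ref{generic}) and the same kind of dovetailing; yours trades the explicit isomorphism for an appeal to the axiomatic characterization of $R$, which slightly simplifies the bookkeeping since you never consult a concrete copy of the random graph, while the paper's version produces the isomorphism directly. Your scheduling remark correctly addresses the only delicate point, namely that an extension task must wait until all of its indices have been populated, which always happens since the basis grows without bound.
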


\begin{proof}
We will construct a basis with the given property using the back-and-forth method. Let $R$ denote the random graph 
and enumerate all of its vertices as $a_1,a_2,a_3 \ldots$. Enumerate the elements of $\mathcal{V}$ as $b_1,b_2,b_3 \ldots$. We will denote the graph
obtained from a subset $S$ of $\mathcal{V}$ by connecting elements $a,b \in S$ with an edge if and only if $P_1(a,b)$ holds by $\mathcal{G}(S)$

We will construct an $R_i$ sequence of finite subgraphs of $R$ and an $S_j$ sequence of finite linearly independent subsets of $\mathcal{V}$
such that
$$R_1 \subset R_2 \subset R_3 \mbox{ and } R=\cup R_i $$
$$S_1 \subset S_2 \subset S_3 \mbox{ and } S=\cup S_j \mbox{ is a basis in } \mathcal{V} $$
$$R_i \cong \mathcal{G}(S_i) \mbox{ for every } i $$
Let $R_1$ be an arbitrary vertex of $R$ and $S_1$ be an arbitrary nonzero element of $\mathcal{V}$. We will use recursion:
\begin{itemize}
\item If $i$ is even and $R_j$ and $S_j$ are already defined for all $j<i$. We choose $R_i$ as the subgraph of $R$ determined by the
vertices of $R_{i-1}$ and that vertex of $R \setminus R_{i-1}$ which has the least index in the series $a_1,a_2,a_3 \ldots$. Then we can choose
an element $b_l$ from $\mathcal{V}\setminus S_{i-1}$ such that $R_i \cong \mathcal{G}(S_{i-1}\cup\{b_l\})$ by Proposition \ref{generic}. Let $S_i$ be $S_{i-1}\cup\{b_l\}$.
\item If $i$ is odd and $R_j$ and $S_j$ are already defined for all $j<i$. We choose $S_i$ as $S_{i-1} \cup b_k$ where $b_k$ is that element
of $\mathcal{V}$ which has the least index in the sequence $b_1,b_2,b_3 \ldots$ amongst the elements linearly independent from $S_{i-1}$.
Then we can choose an $R_i \supset R_{i-1}$ subgraph of $R$ such that $R_i \cong \mathcal{G}(S_i)$ because of the well-known extension property of the random graph.
\end{itemize}
The subset $S$ of $\mathcal{V}$ will be a basis satisfying the requirements of the Lemma.
\end{proof}

Now, let us fix a basis $\mathcal{B}$ with the above property, and let us fix the corresponding graph $\mathcal{G}$ as well.
If $G$ is an arbitrary permutation group acting on the domain set of $\mathcal{V}$
then let $G^\mathcal{B}$ denote closure of the action of the group $G_\mathcal{B}$ on $\mathcal{B}$ in
the symmetric group $\sym(\mathcal{V})$. Here $G_\mathcal{B}$ denotes the setwise stabilizer of $\mathcal{B}$ in $G$.
Note that using this notation 
$\symp^\mathcal{B}=\aut(\mathcal{G})$.

In the following few lemmas we will show that if an element $g$ violates a certain type of relation on linearly independent elements of $\mathcal{V}$, then it can be realized in the group $\langle\symp,g\rangle^\mathcal{B}$.

\begin{lemma}\label{ext_1}
	Let $S$ be a finite subset of $\mathcal{B}$. Suppose we have a function $g\in \sym(\mathcal{V})$ such that $a^g$ is in $\mathcal{B}$ for all $a\in S$. Let $b\in \mathcal{B}$ be an arbitrary element. Then there exists an element $g'\in \langle g,\symp \rangle$ such that $g'$ agrees with $g$ on $S$ and $b^g$ is in $\mathcal{B}$.
\end{lemma}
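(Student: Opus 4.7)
The plan is to build $g'$ in the form $g' = h_1\, g\, h_2$ with $h_1, h_2 \in \symp$, where $h_1$ fixes $S$ pointwise and $h_2$ fixes $T := S^g \subseteq \mathcal{B}$ pointwise. With the right-action convention $a^{g'} = ((a^{h_1})^g)^{h_2}$, for any $a \in S$ this gives $a^{g'} = (a^g)^{h_2} = a^g$, so such a $g'$ automatically agrees with $g$ on $S$. The whole task then reduces to choosing $h_1, h_2$ so that $b^{g'} \in \mathcal{B}$.

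If $b \in S$ then $b^g \in \mathcal{B}$ already and $g' = g$ works, so I may assume $b \notin S$; since $\mathcal{B}$ is a basis this yields $b \notin \langle S\rangle$. By homogeneity of $\mathcal{V}$ (together with Proposition~\ref{generic}), the orbit of $b$ under the pointwise stabilizer $\symp_{(S)}$ is exactly the set of $c \in \mathcal{V} \setminus \langle S\rangle$ with $c \cdot a = b \cdot a$ for every $a \in S$. This set is infinite: the affine subspace of vectors with the prescribed inner products against $S$ has finite codimension in the infinite dimensional $\mathcal{V}$, whereas $\langle S\rangle$ is finite. Because $g$ is a bijection of $\mathcal{V}$ and $\langle T\rangle$ is finite, only finitely many $c$ in this orbit can satisfy $c^g \in \langle T\rangle$, so I can pick $c$ in the orbit with $w := c^g \notin \langle T\rangle$, and choose $h_1 \in \symp_{(S)}$ with $b^{h_1} = c$.

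Next I exploit that $\mathcal{G}$ is the random graph: since $T \subseteq \mathcal{B}$ is finite, the extension property of the random graph produces $b' \in \mathcal{B} \setminus T$ whose adjacencies to $T$ realise the prescribed pattern $b' \cdot a' = w \cdot a'$ for all $a' \in T$. Because $\mathcal{B}$ is a basis, $b' \notin T$ forces $b' \notin \langle T\rangle$. The tuples $(T, w)$ and $(T, b')$ then share the same quantifier-free type in $\mathcal{V}$ (same linear dependence pattern and same matrix of inner products, with $w \cdot w = b' \cdot b' = 0$), so homogeneity of $\mathcal{V}$ supplies $h_2 \in \symp_{(T)}$ with $w^{h_2} = b'$. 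Setting $g' := h_1\, g\, h_2$ now gives $b^{g'} = w^{h_2} = b' \in \mathcal{B}$, as required, and clearly $g' \in \langle g, \symp\rangle$.

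The delicate step, and the reason pre-composition by $h_1$ is needed at all, is the case $b^g \in \langle T\rangle \setminus T$: every element of $\symp_{(T)}$ fixes $\langle T\rangle$ pointwise, so post-composition alone cannot push $b^g$ into $\mathcal{B}$. The role of $h_1$ is precisely to replace $b$ by some $c$ in its $\symp_{(S)}$-orbit whose $g$-image escapes $\langle T\rangle$; this is possible only because the orbit is infinite while $g^{-1}(\langle T\rangle)$ is finite, and this counting observation is the heart of the argument.
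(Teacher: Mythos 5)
Your proof is correct and follows essentially the same route as the paper's: pre-compose with an element of $\symp$ fixing $S$ pointwise to move $b$ to some $c$ whose $g$-image escapes $\langle S^g\rangle$ (possible since the orbit of $b$ under $\symp_{(S)}$ is infinite while $g^{-1}(\langle S^g\rangle)$ is finite), then use the extension property of the random graph $\mathcal{G}$ and homogeneity of $\mathcal{V}$ to post-compose with an element of $\symp$ fixing $S^g$ pointwise that sends $c^g$ into $\mathcal{B}$. Your version is in fact slightly more carefully stated than the paper's (which contains the typo $a^{g\gamma}=a$ where $a^{g\gamma}=a^g$ is meant), and you correctly read the lemma's conclusion ``$b^g\in\mathcal{B}$'' as ``$b^{g'}\in\mathcal{B}$''.
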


\begin{proof}
If $b\in S$, then the statement of the lemma is trivial. Suppose that it is not the case. Then $b\not\in \langle S \rangle$,
	hence by the homogeneity of $\mathcal{V}$ the orbit of $b$
	in the pointwise stabilizer of $S$ in $\symp$ is infinite. In particular there
	exists an element $x$ in this orbit such that $x^g \not\in \langle S^g\rangle$ so the elements of $S^g\cup \{x^g\}$ are linearly independent. By the universal property of the graph $\mathcal{G}$
	there exists an $y \in \mathcal{B}$ such that $P_0(a^g, x^g)\Leftrightarrow P_0(a^g, y)$ for all
	$a\in S$. Then since both $S^g\cup \{x^g\}$ and $S^g\cup \{y^g\}$
	are linearly independent sets, it follows that there exists an automorphism $\gamma$ of $\mathcal{V}$ such that
	$a^{g\gamma}=a$ for all $a\in S$ and $x^{g\gamma}=y$. By the definition of $x$ we know that there exists an
	automorphism $\delta$ of $\mathcal{V}$ such that $a^{\delta}=a$ for all $a\in S$ and $b^{\delta}=x$. Then the
	permutation $g'=\delta g \gamma\in \langle g,\symp \rangle$ will satisfy our requirements.
\end{proof}

\begin{corollary}\label{ext_2}
	Let $S\subset T_1,T_2$ be finite subsets of $\mathcal{B}$. Suppose we have a function $g\in \sym(\mathcal{V})$ such that $a^g$ and $a^{g^{-1}}$ are in $\mathcal{B}$ for all $a\in S$. Then there exists an element $g'\in \langle g,\symp \rangle$ such that
\begin{itemize}
\item $g'$ and $g$ agrees on $S$,
\item $g'^{-1}$ and $g^{-1}$ agrees on $S$,
\item For all $a\in  T_1$ : $a^{g'}\in \mathcal{B}$,
\item For all $a\in T_2$ : $a^{g'^{-1}}\in \mathcal{B}$.
\end{itemize} 
\end{corollary}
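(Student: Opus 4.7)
The plan is to iterate Lemma~\ref{ext_1} in two stages: first for $g$, to push $T_1$ into $\mathcal{B}$ on the forward side, then for the inverse of the resulting map, to push $T_2$ into $\mathcal{B}$ on the backward side. The whole subtlety is choosing the base sets so that stage two does not undo what stage one has achieved.

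For stage one, I would first enlarge $S$ to $S':=S\cup S^{g^{-1}}$. By hypothesis $S^{g^{-1}}\subseteq\mathcal{B}$ and $g$ maps $S'$ into $S\cup S^g\subseteq\mathcal{B}$, so Lemma~\ref{ext_1} applies with base set $S'$. Adding one element of $T_1\setminus S'$ to the base set at each step and iterating finitely many times produces a $g_1\in\langle g,\symp\rangle$ with $g_1|_{S'}=g|_{S'}$ and $a^{g_1}\in\mathcal{B}$ for every $a\in T_1\cup S'$. The only reason for packing $S^{g^{-1}}$ into $S'$ is that $g_1|_{S^{g^{-1}}}=g|_{S^{g^{-1}}}$ is equivalent to $g_1^{-1}|_S=g^{-1}|_S$, so after stage one the $g^{-1}$-side is already correct on $S$.

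For stage two, set $B:=(T_1\cup S')^{g_1}$, a finite subset of $\mathcal{B}$ on which $g_1^{-1}$ lands back in $\mathcal{B}$. Iterating Lemma~\ref{ext_1} applied to $g_1^{-1}$, with base set $B$ and adding one element of $T_2\setminus B$ at a time, yields an $h\in\langle g_1^{-1},\symp\rangle\subseteq\langle g,\symp\rangle$ with $h|_B=g_1^{-1}|_B$ and $a^h\in\mathcal{B}$ for every $a\in T_2$. Setting $g':=h^{-1}$, the identity $h|_B=g_1^{-1}|_B$ inverts to $g'|_{T_1\cup S'}=g_1|_{T_1\cup S'}$, which at one stroke gives $g'|_S=g|_S$ and $a^{g'}\in\mathcal{B}$ for $a\in T_1$; and since $S^{g^{-1}}\subseteq S'$ together with $g_1|_{S^{g^{-1}}}=g|_{S^{g^{-1}}}$ forces $S=(S^{g^{-1}})^{g_1}\subseteq B$, one also recovers $g'^{-1}|_S=h|_S=g_1^{-1}|_S=g^{-1}|_S$. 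The condition on $T_2$ is just $a^{g'^{-1}}=a^h\in\mathcal{B}$.

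The only step that requires thought rather than iteration is the design of $B$. It must contain $S^{g}$ (to preserve $g$-values on $S$ after stage two), it must contain $S$ itself (to preserve $g^{-1}$-values on $S$), and it must contain $T_1^{g_1}$ (to protect the newly arranged behaviour on $T_1$); the choice $B=(T_1\cup S')^{g_1}$ captures all three demands at once. This, together with the preparatory enlargement $S\leadsto S'$ in stage one, is the bookkeeping heart of the argument, and everything else is a direct application of Lemma~\ref{ext_1}.
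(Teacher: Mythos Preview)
Your argument is correct. The two-stage organization --- first pushing $T_1$ forward, then $T_2$ backward --- works, and the enlargement $S\leadsto S'=S\cup S^{g^{-1}}$ together with the choice $B=(T_1\cup S')^{g_1}$ is exactly what is needed so that stage two does not disturb the agreements on $S$ or the forward behaviour on $T_1$. Each iteration of Lemma~\ref{ext_1} is legitimate since at every step the current base set lies in $\mathcal{B}$ and its image under the current map lies in $\mathcal{B}$ as well.

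The paper proceeds differently: instead of two separate sweeps, it runs a single induction on $|T_1\setminus S|+|T_2\setminus S|$, at each step removing one element from whichever of $T_1\setminus S$ or $T_2\setminus S$ is non-empty (passing to the inverse to swap the roles of $T_1$ and $T_2$), and then applying Lemma~\ref{ext_1} once with base set $(T_1\setminus\{b\})\cup T_2^{g''^{-1}}$. This interleaved scheme avoids your preparatory enlargement $S\leadsto S'$ and the design of $B$, at the cost of the ``switch to the inverse'' trick. Both arguments rest on exactly the same engine, iterated use of Lemma~\ref{ext_1}; yours makes the protection mechanism more explicit, while the paper's is a shade more economical in bookkeeping.
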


\begin{proof}
	Let $n:=|T_1\setminus S|+|T_2\setminus S|$. In the proof we will use induction on the value of $n$. If $n=0$, then the statement is trivial. 
	Now, assume $n>0$. By switching to the inverse function if it is necessary, we can assume that $T_1\setminus S\neq \emptyset$. Then let $b\in T_1\setminus S$ an arbitrary element. By the induction hypotheses we know that there exists an element $g''\in \langle g,\symp \rangle$ such that \begin{itemize}
\item $g''$ and $g$ agrees on $S$,
\item $g''^{-1}$ and $g^{-1}$ agrees on $S$,
\item For all $a\in  T_1\setminus \{b\}$ : $a^{g'}\in \mathcal{B}$,
\item For all $a\in T_2$ : $a^{g'^{-1}}\in \mathcal{B}$.
\end{itemize} 
	Now, by applying Lemma \ref{ext_1} to the set $S'=T_1\setminus \{b\} \cup T_2^{g^{-1}}$ and the element $b$ we obtain a permutation $g'\in \langle g',\symp \rangle\subset \langle g,\symp \rangle$ such that $g'$ agrees with $g''$ on $S'$ and $b^{g'}\in \mathcal{B}$. It is easy to check that this $g'$ satisfies the conditions of the corollary.
\end{proof}

\begin{lemma}\label{ext}
Let $\Phi(x_1,x_2,\dots x_n)$ be an arbitrary quantifier-free formula in the language $L=\{P_0\}$,
and assume that for an element $g\in \sym(\mathcal{V})$ there exist $a_1,a_2,\dots, a_n\in \mathcal{V}$ such that 
\begin{itemize}
\item $a_1,a_2,\dots a_n$ are linearly independent,
\item $a_1^g,a_2^g,\dots a_n^g$ are linearly independent,
\item $\Phi(a_1,a_2,\dots a_n)$ is true,
\item $\Phi(a_1^g,a_2^g,\dots, a_n^g)$ is false.
\end{itemize}
Then there exists an $h\in \langle\symp,g\rangle^\mathcal{B}$ and $b_1,b_2,\dots ,b_n\in \mathcal{B}$
such that $\Phi(b_1,b_2,\dots, b_n)$ is true, but $\Phi(b_1^h,b_2^h,\dots ,b_n^h)$ is false.
\end{lemma}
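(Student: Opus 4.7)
The goal is to witness the failure of $\Phi$ on a tuple of basis elements together with an appropriate element of the closure $\langle g,\symp\rangle^{\mathcal{B}}$.

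The formula $\Phi$ is quantifier-free in the single relation $P_0$, so its truth value on a tuple of pairwise distinct elements depends only on the symmetric matrix of $P_0$-values. The hypothesis therefore gives two linearly independent tuples $(a_1,\dots,a_n)$ and $(a_1^g,\dots,a_n^g)$ realising distinct $P_0$-patterns. Since $\mathcal{G}$ is the random graph on $\mathcal{B}$, every finite $P_0$-pattern is realised there; I would pick $b_1,\dots,b_n\in\mathcal{B}$ matching the pattern of $(a_1,\dots,a_n)$ and $c_1,\dots,c_n\in\mathcal{B}$ matching the pattern of $(a_1^g,\dots,a_n^g)$. Then $\Phi(b_1,\dots,b_n)$ holds and $\Phi(c_1,\dots,c_n)$ fails.

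Because $\mathcal{V}$ is homogeneous, any two linearly independent tuples of the same $\{P_0\}$-type lie in a common $\symp$-orbit; I would pick $\alpha,\beta\in\symp$ with $a_i^\alpha=b_i$ and $(a_i^g)^\beta=c_i$, so that $g_0:=\alpha^{-1}g\beta\in\langle g,\symp\rangle$ maps $b_i$ to $c_i$ for every $i$. It remains to promote this group element into the closure $\langle g,\symp\rangle^{\mathcal{B}}$, i.e.\ to find $h\in\langle g,\symp\rangle^{\mathcal{B}}$ with $h|_{\{b_1,\dots,b_n\}}=g_0|_{\{b_1,\dots,b_n\}}$; automatically $\Phi(b_1^h,\dots,b_n^h)$ will then be false.

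This final step is a standard back-and-forth along an enumeration of $\mathcal{B}$. At each stage, starting from the commitment $b_i\mapsto c_i$, one applies Corollary~\ref{ext_2} to produce a new element of $\langle g,\symp\rangle$ that respects all accumulated forward and backward commitments and sends the next enumerated basis element into $\mathcal{B}$ in both directions, which in turn lets one commit its forward and backward images. The pointwise limit on $\mathcal{B}$ is a bijection $h\colon\mathcal{B}\to\mathcal{B}$ belonging to $\langle g,\symp\rangle^{\mathcal{B}}$ by construction.

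The one non-routine point is the very first application of Corollary~\ref{ext_2}, whose two-sided precondition requires that both $b_i^{g_0}$ and $b_i^{g_0^{-1}}$ lie in $\mathcal{B}$, whereas $g_0$ only gives the forward half. I would remedy this by composing $g_0$ on the right with an element $\gamma\in\symp$ that fixes $c_1,\dots,c_n$ pointwise and maps the finite set $\{g_0^{-1}(b_i)\}$ into $\mathcal{B}$; existence of such a $\gamma$ follows again from the homogeneity of $\mathcal{V}$ together with the random-graph extension property on $\mathcal{G}$, since one may freely prescribe the $P_0$-pattern between new basis elements and $\{c_1,\dots,c_n\}$.
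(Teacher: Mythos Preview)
Your argument is essentially the paper's: realise both $P_0$-patterns on basis tuples, conjugate $g$ by two elements of $\symp$ to obtain $g_0\in\langle g,\symp\rangle$ with $b_i^{g_0}=c_i$, then run a back-and-forth along an enumeration of $\mathcal B$ via Corollary~\ref{ext_2} to get the limit $h\in\langle g,\symp\rangle^{\mathcal B}$. You are also right to flag the bootstrapping issue for the two-sided hypothesis of Corollary~\ref{ext_2}, which the paper passes over.

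Your proposed remedy, however, does not work. Composing on the right with a $\gamma$ that fixes the $c_i$ gives $(g_0\gamma)^{-1}(b_i)=g_0^{-1}\bigl(\gamma^{-1}(b_i)\bigr)$, so the stipulation that $\gamma$ carry $\{g_0^{-1}(b_i)\}$ into $\mathcal B$ is irrelevant to the backward images of the $b_i$. To influence those you would have to pre-compose, and then $\gamma$ must fix the $b_i$ rather than the $c_i$; but even after that correction a single such $\gamma\in\symp$ need not exist: nothing prevents some $g_0^{-1}(b_i)$ from lying in $\langle b_1,\dots,b_n\rangle$, and any $\gamma\in\symp$ fixing $b_1,\dots,b_n$ is linear and therefore forced to fix that element too, leaving it outside $\mathcal B$. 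The homogeneity/random-graph argument you invoke only lets you prescribe images of elements \emph{independent} of the fixed set. The honest repair is to iterate Lemma~\ref{ext_1} on $g_0^{-1}$ (whose one-sided hypothesis $c_i^{g_0^{-1}}=b_i\in\mathcal B$ \emph{is} satisfied), adjoining $b_1,\dots,b_n$ one at a time to the controlled set so as to obtain $h_n\in\langle g,\symp\rangle$ with $b_i^{h_n}=c_i$ and $b_i^{h_n^{-1}}\in\mathcal B$ for all $i\le n$; from there Corollary~\ref{ext_2} applies directly at every subsequent step and your back-and-forth goes through.
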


\begin{proof}
	Let $G$ denote the following graph: the vertices of $G$ are $a_1,a_2, \dots a_n$
	and two vertices $a_i$ and $a_j$ are connected if and only if $P_0(a_i,a_j)$ is false.
	Then the graph $G$ is a finite undirected graph without loops,
	therefore it can be embedded into $\mathcal{G}$.
	Let $\psi$ be an arbitrary embedding of $G$ into $\mathcal{G}$. 
	Then $\Phi(a_1^\psi,a_2^\psi,\dots,a_n^\psi)$ is true because $\Phi$ is a quantifier-free formula.
	Since both $\{a_1,a_2,\dots a_n\}$ and $\{a_1^\psi,a_2^\psi,\dots, a_n^\psi\}$ are linearly independent
	sets it follows
	from the homogeneity that $\psi$ extends to an automorphism of $\mathcal{V}$.
	We will denote this automorphism by $\psi$ as well. Let $G'$ be the graph on the vertices
	$a_1^g,a_2^g,\dots ,a_n^g$ defined similarly, and let $\psi'$ be an embedding of $G'$ into $\mathcal{G}$.
	Then $\Phi(a_1^{g\psi'},a_2^{g\psi'},\dots,a_n^{g\psi'})$ is false and $\psi'$ extends to an automorphism of $\mathcal{V}$ as well.

	Let $b_1,b_2,b_3,\dots$ be an enumeration of $\mathcal{B}$ such that $b_i=a_i^\psi$ for $i=1,2,\dots n$.
	Then by using Corollary \ref{ext_2} we can define a sequence of functions $h_n,h_{n+1},h_{n+2},\dots \in \left< \symp,g \right> ^{\mathcal{B}}$ by recursion such that 
\begin{itemize}
\item $h_n$ and $\psi^{-1} g\psi'$ agrees on $\{b_1,b_2,\dots ,b_n\}$,
\item $h_n^{-1}$ and $\psi'^{-1} g^{-1}\psi$ agrees on $\{b_1,b_2,\dots ,b_n\}$,
\item $h_{k+1}$ and $h_k$ agrees on $\{b_1,b_2,\dots ,b_k\}$,
\item $h_{k+1}^{-1}$ and $h_k^{-1}$ agrees on $\{b_1,b_2,\dots, b_k\}$,
\item For all $i\leq k$ : $b_i^{h_k}\in \mathcal{B}$,
\item For all $i\leq k$ : $b_i^{h_k^{-1}}\in \mathcal{B}$.
\end{itemize} 	

	The sequences $h_k$ and $h_k^{-1}$ restricted to $\mathcal{B}$ are convergent, therefore there exists
	an $h\in \langle\symp,g\rangle^\mathcal{B}$ such that
	$h|_{\{b_1,b_2,\dots, b_k\}}=h_k|_{\{b_1,b_2,\dots, b_k\}}$ for all $k\geq n$. In
	particular $h|_{\{b_1,b_2,\dots, b_k\}}=h_k|_{\{b_1,b_2,\dots, b_k\}}=\psi^{-1} g\psi'|_{\{b_1,b_2,\dots, b_k\}}$.
	So the formula $\Phi(b_1^h,b_2^h,\dots, b_k^h)$ is false which finishes the proof of Lemma \ref{ext}.
\end{proof}

\begin{definition}\label{negyzetdef}
Let $\negyz (a,b,c,d)$ denote the following $4$-ary relation:
$$\negyz (a,b,c,d) \Leftrightarrow a\cdot b + b \cdot c + c \cdot d + d \cdot a =1 \mbox{ and }$$
$$a,b,c \mbox{ and } d \mbox{ are pairwise disjoint nonzero elements} $$
\end{definition}

\begin{lemma}\label{symptranz}
Let $\symp \leq G \leq (\sym (\mathcal{V}))_0$ be a closed group. If $G$ does not preserve $\negyz$
then $G^{\mathcal{B}}=\sym(\mathcal{B})$. Moreover, in this case for every $n$ the group $G$ acts transitively on the $n$-element
linearly independent sets.
\end{lemma}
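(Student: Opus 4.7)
The plan is to use Lemma~\ref{ext} to lift a violation of $\negyz$ from $\mathcal{V}$ to the basis $\mathcal{B}$, and then to conclude $G^{\mathcal{B}}=\sym(\mathcal{B})$ by combining the resulting information with Thomas's classification of the reducts of the random graph~\cite{thomas1}. The ``moreover'' transitivity statement will then follow from the homogeneity of $\mathcal{V}$ together with the universal property of $\mathcal{G}$.

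Fix $g\in G$ and pairwise distinct nonzero $a_1,\dots,a_4\in\mathcal{V}$ with $\negyz(a_1,\dots,a_4)$ holding and $\negyz(a_1^g,\dots,a_4^g)$ failing. Lemma~\ref{ext} demands that both $4$-tuples be linearly independent. Since four pairwise distinct nonzero vectors over $\ff_2$ span either a $3$- or a $4$-dimensional subspace, the first technical step is to arrange both tuples to have $4$-dimensional span. When the source spans only $3$ dimensions, I would apply Proposition~\ref{generic} to produce a vector $e$ outside the $3$-dimensional span of an independent subset of the $a_i$'s, with symplectic products $e\cdot a_i$ chosen so that replacing the dependent $a_j$ by $a_j+e$ preserves $\negyz$ (the constraint reduces to a single orthogonality relation like $e\cdot(a_i+a_k)=0$). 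A simple counting argument, using that the ``bad'' set $\{e : (a_j+e)^g\in\langle a_1^g,\dots,a_3^g\rangle\}$ is countable while the solutions of the $\negyz$-preservation constraint form a cofinite affine set given by finitely many linear conditions, shows we can choose $e$ so the image tuple becomes independent as well.

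With both $4$-tuples now linearly independent and $\negyz$ expressed as a quantifier-free formula in $\{P_0\}$, Lemma~\ref{ext} yields $h\in\langle\symp,g\rangle^{\mathcal{B}}\subseteq G^{\mathcal{B}}$ and $b_1,\dots,b_4\in\mathcal{B}$ with $\negyz(b_1,\dots,b_4)$ true and $\negyz(b_1^h,\dots,b_4^h)$ false. On basis elements $\negyz(b_1,\dots,b_4)$ is precisely the parity of the number of edges of $\mathcal{G}$ along the $4$-cycle $b_1b_2b_3b_4$. A short verification shows this parity is preserved under edge-complementation (four cycle edges flip) and under switching at any subset of vertices (by a case analysis on $|S\cap\{b_1,\dots,b_4\}|$ the number of flipped cycle edges is always even). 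Consequently, each proper closed supergroup of $\aut(\mathcal{G})$ in $\sym(\mathcal{B})$ appearing in Thomas's classification~\cite{thomas1} preserves $\negyz$. Since $G^{\mathcal{B}}\supseteq\symp^{\mathcal{B}}=\aut(\mathcal{G})$ is closed and does not preserve $\negyz$ on $\mathcal{B}$ (via $h$), it must equal $\sym(\mathcal{B})$. The main obstacle here is the independence bookkeeping for Lemma~\ref{ext}, combined with the verification that $\negyz$ really is an invariant of every non-symmetric-group random-graph reduct.

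For the ``moreover'' clause, let $X,Y\subseteq\mathcal{V}$ be linearly independent $n$-sets. Repeated application of Proposition~\ref{generic} combined with the universal property of $\mathcal{G}$ yields $n$-element subsets $X',Y'\subseteq\mathcal{B}$ with the same Gram matrices under $\cdot$ as $X$ and $Y$ respectively. Homogeneity of $\mathcal{V}$ then supplies $\alpha,\beta\in\symp$ with $\alpha(X)=X'$ and $\beta(Y)=Y'$. Since $G^{\mathcal{B}}=\sym(\mathcal{B})$ and $G$ is closed, there is an element of $G_{\mathcal{B}}$ whose action on the finite set $X'\cup Y'$ maps $X'$ setwise onto $Y'$; composing with $\alpha$ and $\beta^{-1}$ produces the required element of $G$ sending $X$ to $Y$.
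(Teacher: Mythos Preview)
Your overall architecture matches the paper's: reduce to a violation of $\negyz$ on linearly independent tuples, apply Lemma~\ref{ext}, then invoke Thomas's classification; your treatment of the ``moreover'' clause and your verification that the $4$-cycle parity is invariant under complementation and switching are both fine and essentially identical to the paper's argument.

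The genuine gap is in your independence step. When the source tuple is dependent you replace the dependent $a_j$ by $a_j+e$, and you impose two conditions on $e$: that $\negyz(a_1,a_2,a_3,a_j+e)$ still holds, and that $(a_j+e)^g\notin\langle a_i^g:i\neq j\rangle$. But you never arrange that $\negyz(a_1^g,a_2^g,a_3^g,(a_j+e)^g)$ still \emph{fails}. That failure depends on $(a_j+e)^g\cdot(a_{j-1}^g+a_{j+1}^g)$, and since $g$ is an arbitrary (not linear) permutation you have no control over this quantity via conditions on $e\cdot a_i$. So after your replacement you may well have $\negyz$ holding on both sides, and the violation is lost. (Incidentally, the solution set of your source-side constraint is an affine hyperplane, hence infinite but not ``cofinite''; that slip is harmless for the independence count but does not help with the missing target-side constraint.)

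The paper handles this differently: pick $s$ with $s\notin\langle a,b,c,d\rangle$ and $s^g\notin\langle a^g,b^g,c^g,d^g\rangle$, and observe that the three values $\negyz(a,b,c,d)$, $\negyz(a,b,s,d)$, $\negyz(s,b,c,d)$ always sum to $0$ in $\ff_2$, and likewise for their $g$-images. Since exactly one of $\negyz(a,b,c,d)$ and $\negyz(a^g,b^g,c^g,d^g)$ holds, a parity comparison forces $g$ to violate $\negyz$ on $(a,b,s,d)$ or on $(s,b,c,d)$. Repeating with a second generic element $r$ in place of $b$ or $d$ yields a violation on a tuple where \emph{both} sides are linearly independent, at which point Lemma~\ref{ext} applies. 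This parity trick is precisely what lets one change coordinates without losing control of the target-side $\negyz$ value; your additive replacement $a_j\mapsto a_j+e$ does not have an analogue of it.
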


\begin{proof}
If $G$ does not preserve $\negyz$ then there is a permutation $g \in G$ and pairwise disjoint elements
$a,b,c,d \in \mathcal{V}\setminus \{0\}$ such that $\negyz(a,b,c,d)$ holds but $\negyz(a^g,b^g,c^g,d^g)$ does not hold. We can assume that $a,b,c$ and $d$ are
linearly independent: there is an element $s \in \mathcal{V}\setminus \{0\}$ such that $s \notin \left< a,b,c,d\right>$ and $s^g \notin \left< a^g,b^g,c^g,d^g\right>$.
Observe that exactly zero or two holds from the formulas $\negyz(a,b,c,d),\negyz(a,b,s,d)$ and $\negyz(s,b,c,d)$, and similarly
exactly zero or two holds from the formulas $\negyz(a^g,b^g,c^g,d^g),\negyz(a^g,b^g,s^g,d^g)$ and $\negyz(s^g,b^g,c^g,d^g)$.
Using that exactly one formula from $\negyz(a,b,c,d)$ and $\negyz(a^g,b^g,c^g,d^g)$ holds
we can conclude that $g$
does not preserve the relation $\negyz$ on $(a,b,s,d)$ or on $(s,b,c,d)$. Similarly we can replace $b$ or $d$ by an element $r$ such that
$r$ and $r^g$ are linearly independent from the previous vectors. The elements $s,r$ and the remaining ones $x \in \{a,c\}$ and $y \in \{b,d\}$ will be linearly independent
and their images under $g$ will also be independent.

The relation $\negyz$ was defined by a quantifier-free formula \ref{negyzetdef} so we can apply Lemma \ref{ext} which yields that
the group $\aut(\mathcal{G}) \leq G^{\mathcal{B}} \leq \sym({\mathcal{G}})$ does not preserve the relation $\negyz$.
Using the classification obtained by Thomas
in \cite{thomas1} all closed groups containing $\aut{(\mathcal{G})}$ preserve $\negyz$ except the group $\sym(\mathcal{B})$
so $G^{\mathcal{B}}=\sym(\mathcal{B})$.

We can prove that for every $n$ the group $G$ acts transitively on the $n$-element
linearly independent sets by showing that any finite linearly independent sets can be mapped into $\mathcal{B}$ by an automorphism of
$\mathcal{V}$. Now let $S$ be a finite set of linearly independent elements in $\mathcal{V}$. Let us consider the graph
$G$ the vertices of which are the elements of $S$ and two vertices $a,b$ are connected if and only if $P_0(a,b)$ is false. Then by the
universality of $\mathcal{G}$ the graph $G$ embeds into $\mathcal{G}$. Let $\psi$ be an arbitrary embedding of $G$ into $\mathcal{G}$.
Then by the homogeneity of $\mathcal{V}$ $\psi$ can be extended to an automorphism of $\mathcal{V}$. This automorphism maps $S$ into $\mathcal{B}$.
\end{proof}

\begin{lemma}\label{symplin}
If a permutation $g\in (\sym(\mathcal{V}))_0$ preserves the relation $\negyz$ then it is linear.
\end{lemma}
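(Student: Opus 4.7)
The plan is to extract the symplectic form $\cdot$ from $\negyz$ via a short XOR manipulation, then build a linear map $\phi$ on $\mathcal{V}$ that must coincide with $g$. First I would rewrite the relation analytically: by bilinearity $a\cdot b+b\cdot c+c\cdot d+d\cdot a=(a+c)\cdot(b+d)$, so $\negyz(a,b,c,d)$ is equivalent to $(a+c)\cdot(b+d)=1$ together with the distinctness/nonzero conditions. Working in $\ff_2$, preservation of this relation by $g$ is the same as the identity
\[(a+c)\cdot(b+d)=(a^g+c^g)\cdot(b^g+d^g)\]
for every pairwise distinct nonzero $a,b,c,d$. XOR-ing two such equations in which $d$ is replaced by two distinct values $d_1,d_2$ cancels $b$ out of the right-hand side, and since $\mathcal{V}$ is infinite a witness $b$ distinct from everything always exists; one therefore obtains
\[(a+c)\cdot(d_1+d_2)=(a^g+c^g)\cdot(d_1^g+d_2^g)\]
for every pairwise distinct nonzero quadruple $a,c,d_1,d_2$. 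Call this identity $(\star)$.

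Next I would define $\phi\colon \mathcal{V}\to\mathcal{V}$ by $\phi(0)=0$ and $\phi(x):=a^g+{a'}^g$ for any decomposition $x=a+a'$ with $a\neq a'$ both nonzero. Well-definedness is the main point: if $x=a+a'=b+b'$ with $\{a,a'\}\neq\{b,b'\}$, then the four elements are pairwise distinct, and applying $(\star)$ to $(a,a')$ and to $(b,b')$ against a common auxiliary pair $(d_1,d_2)$ yields $\bigl[a^g+{a'}^g+b^g+{b'}^g\bigr]\cdot(d_1^g+d_2^g)=0$. Since $g$ is a bijection of the infinite set $\mathcal{V}$ and $(d_1,d_2)$ varies freely outside a finite set, $d_1^g+d_2^g$ runs over all of $\mathcal{V}\setminus\{0\}$, and non-degeneracy of $\cdot$ forces $a^g+{a'}^g=b^g+{b'}^g$. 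With $\phi$ well-defined, $(\star)$ now reads $x\cdot y=\phi(x)\cdot\phi(y)$ for all $x,y$, so $\phi$ preserves the symplectic form; it is injective by non-degeneracy, and surjective since any nonzero $w$ equals $e_1^g+e_2^g=\phi(e_1+e_2)$ for suitable distinct nonzero $e_1,e_2$.

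Then I would upgrade preservation of $\cdot$ to linearity: for every $x,y,z$,
\[\bigl[\phi(x+y)+\phi(x)+\phi(y)\bigr]\cdot\phi(z)=(x+y)\cdot z+x\cdot z+y\cdot z=0,\]
and since $\phi(z)$ exhausts $\mathcal{V}$, non-degeneracy gives $\phi(x+y)=\phi(x)+\phi(y)$. Finally I would identify $\phi$ with $g$: linearity of $\phi$ together with $\phi(a+a')=a^g+{a'}^g$ gives $\phi(a)+a^g=\phi(a')+{a'}^g$ for any two distinct nonzero $a,a'$, so $c:=\phi(a)+a^g$ is independent of $a\neq 0$; thus $a^g=\phi(a)+c$ on $\mathcal{V}\setminus\{0\}$ while $0^g=0$, so the image of $g$ is $\{0\}\cup(\mathcal{V}\setminus\{c\})$. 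Surjectivity of $g$ forces $c=0$, hence $g=\phi$ is linear. The only delicate step is the well-definedness of $\phi$, which relies crucially on $d_1^g+d_2^g$ covering all nonzero vectors even though $g$ is a priori far from linear.
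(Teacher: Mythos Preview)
Your argument is correct and takes a genuinely different route from the paper's. The paper introduces an auxiliary $5$-ary relation $\otsz$ (``odd number of nonorthogonal pairs among five distinct nonzero elements''), checks that it is first-order definable from $\negyz$, and then uses the parity fact that $\otsz(a,b,c,a+b+c,x)$ is independent of $x$ while $\otsz(a,b,c,d,x)$ is not when $a,b,c,d$ are independent; a non-linear $h$ is caught because it sends some $(a,b,a+b)$ to an independent triple, so $\otsz$ cannot be preserved. Your proof instead exploits the algebraic identity $a\cdot b+b\cdot c+c\cdot d+d\cdot a=(a+c)\cdot(b+d)$ to recover the symplectic form itself from $\negyz$, builds the map $\phi(x)=a^g+{a'}^g$, and shows $\phi=g$. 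This is more constructive and in fact yields a slightly stronger conclusion than the lemma states: you show $g\in\symp$, not merely $g\in\gl$. The only places worth making fully explicit are (i) that ``preserves $\negyz$'' is being used in both directions, which is automatic for a permutation, and (ii) the small bookkeeping that for any nonzero $x,y$ one can realise $x=a+c$, $y=d_1+d_2$ with $a,c,d_1,d_2$ pairwise distinct and nonzero (including the case $x=y$); both are routine since $\mathcal V$ is infinite. The paper's approach has the virtue of staying at the level of definable relations, while yours gives a clean structural identification of $\aut(\negyz)\cap\sym_0$ with $\symp$.
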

\begin{proof}
First we define the $5$-ary relation $\otsz(a,b,c,d,e)$:
$$\otsz (a,b,c,d,e) \Leftrightarrow  \mbox{ the number of unordered pairs } \{x,y\} \mbox{ such that }$$
$$x\neq y,x,y \in \{a,b,c,d,e\},x \cdot y = 1  \mbox{ is odd }$$
$$\mbox{ and } a,b,c, d \mbox{ and } e \mbox{ are pairwise disjoint nonzero elements} $$

The relation $\otsz$ is first order definable from the relation $\negyz$: $\otsz(a,b,c,d,e)$ holds if and only if the number of true formulas amongst
$\negyz(a,b,c,d),\negyz(a,b,c,e),\negyz(a,b,d,e),\negyz(a,c,d,e)$ and $\negyz(b,c,d,e)$ is odd. If $a,b$ and $c$ are linearly independent elements then
the truth value of the formula $\otsz(a,b,c,a+b+c,x)$ will be the same regardless of the choice of $x$. This is true because exactly zero or two of
$a\cdot x, b\cdot x, c \cdot x$ and $(a+b+c) \cdot x$ can be $1$. On the other hand, if $a,b,c$ and $d$ are four linearly independent element then
by Proposition \ref{generic} we can choose $x$ and $y$ such that $\otsz(a,b,c,d,x)$ holds and $\otsz(a,b,c,d,y)$ does not hold.

Let $h \in (\sym(\mathcal{V}))_0$ be a non-linear permutation. We will show that $h$ does not preserve the relation $\negyz$. It suffices to show that
$h$ does not preserve $\otsz$ because $\otsz$ is first order definable from $\negyz$. We can assume there are elements
$a,b,a+b \in \mathcal{V} \setminus \{0\}$
such that $a^h+b^h \neq (a+b)^h$ (if this is not the case then we can change to work with $h^{-1}$ instead of $h$ because a permutation and its inverse
both preserve or do not preserve a given relation). There exist $x$ and $y$ such that
$\otsz(a,b,a+b,(a^h+b^h+(a+b)^h)^{h^{-1}},x)$ holds and $\otsz(a,b,a+b,(a^h+b^h+(a+b)^h)^{h^{-1}},y)$ does not hold. The truth value of
$\otsz(a^h,b^h,(a+b)^h,a^h+b^h+(a+b)^h,x^h)$ and $\otsz(a^h,b^h,(a+b)^h,a^h+b^h+(a+b)^h,y^h)$ must be the same. So $h$ can not preserve $\negyz$.

So every permutation preserving $\negyz$ must be linear.
\end{proof}

\begin{theorem}\label{sympsmall}
If $\symp \leq G \leq \sym(\mathcal{V})_0$ is a closed group then either $G=\symp$, $G=\gl$ or $G=\sym(\mathcal{V})_0$.
\end{theorem}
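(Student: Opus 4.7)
My plan is to split on whether every element of $G$ preserves the quaternary relation $\negyz$ from Definition \ref{negyzetdef}. If every $g\in G$ preserves it, then Lemma \ref{symplin} forces each such element to be linear. Using the identity
\[a\cdot b+b\cdot c+c\cdot d+d\cdot a=(a+c)\cdot(b+d),\]
$\negyz(a,b,c,d)$ becomes equivalent to ``$(a+c)\cdot(b+d)=1$ together with $a,b,c,d$ pairwise distinct and nonzero''. For any $x\ne y$ in $\mathcal{V}\setminus\{0\}$ the infinite-dimensional setting supplies pairwise distinct nonzero $a,b,c,d$ with $x=a+c$, $y=b+d$, so a linear $g$ preserving $\negyz$ must preserve the value of $x\cdot y$ on every such pair; hence $g\in\symp$, and combined with $\symp\le G$ this yields $G=\symp$.

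Otherwise some element of $G$ violates $\negyz$, so Lemma \ref{symptranz} applies and gives $G^{\mathcal{B}}=\sym(\mathcal{B})$ together with transitivity of $G$ on $n$-element linearly independent sets. The proof of that lemma shows that any linearly independent $n$-tuple can be moved into $\mathcal{B}$ by some element of $\symp$; composing with a $G_{\mathcal{B}}$-element realising the desired ordered permutation on $\mathcal{B}$ upgrades the transitivity to \emph{ordered} linearly independent $n$-tuples. Now I would split again on whether $G\le\gl$. If it is, then for $\alpha\in\gl$ and any finite $S\subseteq\mathcal{V}$ I take a basis $B$ of $\langle S\rangle$, use tuple transitivity to get $g\in G$ with $g|_B=\alpha|_B$, and exploit the linearity of $g$ to extend this agreement to $\langle B\rangle\supseteq S$; closedness then yields $\alpha\in G$, so $G=\gl$.

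If instead $G\not\le\gl$, fix a non-linear $g_1\in G$. My goal is to prove $G$ is $n$-transitive on $\mathcal{V}\setminus\{0\}$ for every $n$, which together with closedness will force $G=\sym(\mathcal{V})_0$. The infinite-orbit hypothesis of Lemma \ref{kislemma} is automatic because $\symp\le G$ and Proposition \ref{generic} is available. For $3$-transitivity, given a dependent triple $(a,b,a+b)$ I use transitivity on linearly independent $2$-tuples to find $h\in G$ mapping $(a,b)$ to a pair $(a',b')$ on which $g_1$ violates linearity; then either $h(a+b)\ne a'+b'$ already yields a linearly independent image, or else $g_1h$ does so because $g_1(a'+b')\ne g_1(a')+g_1(b')$. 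The induction step will follow Theorem \ref{0fix} once its ``swap'' manoeuvre is carried out in the current setting.

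The main obstacle is precisely that swap: Theorem \ref{0fix} realises it by a linear transposition drawn from $\gl\le G$, which we do not yet have available. My plan to replace it is to use the $n$-transitivity already in hand to move $\{a_1,\dots,a_n\}$ into $\mathcal{B}$, then realise the transposition of two chosen basis vectors by an element of $G$ via $G^{\mathcal{B}}=\sym(\mathcal{B})$, and to secure a non-trivial action on the dependent vector $a_{n+1}\in\langle a_1,\dots,a_n\rangle$ by combining $g_1$ with appropriate $\symp$-elements, Proposition \ref{generic} providing the required freedom in the $\cdot$-inner products that arise. Once such a $G$-element is produced, Lemma \ref{kislemma} closes the induction verbatim as in Theorem \ref{0fix}.
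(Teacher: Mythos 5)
Most of your decomposition is sound, and two pieces are genuinely different from (and as good as) the paper's: your Case 1 argument that a $\negyz$-preserving permutation fixing $0$ is not merely linear (Lemma \ref{symplin}) but actually symplectic, via the identity $a\cdot b+b\cdot c+c\cdot d+d\cdot a=(a+c)\cdot(b+d)$, is correct and tidy (the paper instead organizes the cases so that it only ever needs the implication ``$G\le\gl$ and $G\ne\symp$ $\Rightarrow$ some element violates $\negyz$'', via Proposition \ref{autp_0}); and your direct $3$-transitivity argument (find a pair witnessing non-linearity of $g_1$, pull $(a,b)$ onto it, and observe that either $h$ or $g_1$ after $h$ breaks the dependency) is cleaner than the paper's orbit-counting contradiction. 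Your treatment of the subcase $G\le\gl$ matches the paper.

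The genuine gap is the induction step in the subcase $G\not\le\gl$, exactly at the point you flag as ``the main obstacle.'' Your replacement for the flip does not work: an element of $G_{\mathcal B}$ supplied by $G^{\mathcal B}=\sym(\mathcal B)$ realizes the transposition \emph{on basis vectors only}; since it is not known to be linear, you have no information about where it sends the dependent vector $a_{n+1}\in\langle a_1,\dots,a_n\rangle$ (it might fix it, in which case the flip achieves nothing, or send it anywhere). The entire point of the flip in Theorem \ref{0fix} is that its linearity lets you compute $a_{n+1}^g=a_{n+1}+a_j+a_k\ne a_{n+1}$ while the set $\{a_1,\dots,a_n\}$ is preserved; ``combining $g_1$ with appropriate $\symp$-elements, Proposition \ref{generic} providing the required freedom'' is a hope, not an argument, and I do not see how to extract the needed control from it. The paper's resolution is different and is the one idea you are missing: before flipping, use $n$-transitivity to normalize the tuple so that $a_n=a_1+\cdots+a_{n-1}$ \emph{and} $a_i\cdot a_j=0$ for all $i<j\le n$ (whence also $a_i\cdot a_{n+1}=0$ by bilinearity). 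With all relevant vectors pairwise orthogonal, the transposition of $a_j$ and $a_k$ extends to an element of $\symp$ itself, so the flip is linear and the rest of the argument of Theorem \ref{0fix} goes through verbatim. You should adopt that orthogonality normalization; without it, your induction does not close.
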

 
\begin{proof}
First suppose $\symp \lneqq G \leq \gl$. Then by Proposition \ref{autp_0} there is an element $g\in G$ which does not preserve the relation $P_0$.
Now, let $a,b\in \mathcal{V}\setminus 0$ elements such that $a\cdot b\neq a^g\cdot b^g$. We can assume that $a\cdot b=0$ and $a^g \cdot b^g=1$.
Then let us choose an element $c\in \mathcal{V} \setminus \langle a,b,\rangle$ such that $c\cdot a=c\cdot b=0$.
Then $a \cdot c = c \cdot b = b \cdot (a+c) = (a+c) \cdot a =0$ so $\negyz(a,c,b,a+c)$ does not hold. Furthermore
$a^g \cdot c^g + c^g \cdot b^g + b^g \cdot (a+c)^g + (a+c)^g \cdot a^g = a^g \cdot b^g =1$ therefore $g$ does not preserve the relation $\negyz$.
Then by Lemma \ref{symptranz} we know that for all $n$ the group $G$ acts transitively on $n$-element linearly independent sets. 
Now, let $S$ and $T$ finite dimensional subspaces of $\mathcal{V}$ of the same dimension, and let $\psi$ be a $S\rightarrow T$ linear isomorphism.
We need to show that there exists a linear automorphism $h\in G$ which extends $\psi$ because this extension property characterizes $\ff^\omega_2$.
Let $b_1,b_2,\dots,b_n$ be a basis of $S$. Then we know there exists an element $h\in G$ such that $b_i^h=b_i^\psi$ for all $i=1,2,\dots, n$
since $G$ acts transitively on $n$-element linearly independent sets. The transformation $h$ is linear as well, hence $h|_S=\psi$. So in this case
$\gl \leq G$.

Now suppose $G \lneqq \gl$. This means that $G$ does not preserve the relation $\negyz$ by Lemma \ref{symplin}, using Lemma \ref{symptranz} we get that
for all $n$ the group $G$ acts transitively on $n$-element linearly independent sets. We will prove that $G$ is $n$-transitive on
$\mathcal{V} \setminus \{0\}$ by induction modifying the proof of Theorem \ref{0fix}.

The group $G$ is $2$-transitive because $G$ acts transitively on two-element linearly independent sets.

We will show the $3$-transitivity.
The
group $G$ acts transitively on the 3-element independent
sets, hence it is enough to show that any three vectors $a,b,c\in \mathcal{V} \setminus \{ 0 \}$
can be mapped to an independent set.
Let $a,b,c\in \mathcal{V} \setminus \{ 0 \}$ and assume they cannot be mapped into an independent system. Then all $3$-tuples $(x,y,z)$ such that
$x+y=z$ are on the same orbit because the group $\symp$ has two orbits on such tuples: on the first orbit $P_0$ holds for all pairs of elements of the tuple,
and on the second orbit  $P_1$ holds for all pairs of elements of the tuple. Since $G$ is $2$-transitive $(a,b,c)$ can be mapped to $(x,y,z)$ where
$a \cdot b \neq x \cdot y$, using the assumption that $(a,b,c)$ cannot be mapped to an independent system $x+y=z$ so there is only one orbit on the linearly dependent
$3$-tuples in $G$.
The condition  $G \lneqq \gl$
implies that there are $a',b',c'\in \mathcal{V} \setminus \{ 0 \}$ and $g\in G$ such that
$a'+b'=c'$ and $a'^g+b'^g\neq c'^g $. Now, consider a map $h \in G$
mapping $a,b,c$ to  $a',b',c'$, respectively. This $h$ map exists because all $3$-tuples such that $a+b=c$ must lie on the same orbit. The map $hg$ maps
$a,b,c$ to an independent set. This contradicts our assumption that $(a,b,c)$ cannot be mapped to an independent set.

Now, we prove $n$-transitivity by induction. We show that every set of
$n+1$ vectors can be mapped to an independent set.
Let  $a_1, a_2 \ldots a_n, a_{n+1}
\in \mathcal{V} \setminus \{ 0 \}$ be dependent distinct elements. By the
$n$-transitivity we may assume that $a_n=a_1+ a_2 +\ldots+ a_{n-1}$ and $a_i \cdot a_j =0$ for all $1 \leq i < j \leq n$.
We can also assume that
there is an $h\in G$ such that  $\{a_i^h|i=1,2,\dots,n\}$ is
a linearly
independent set such that $a_i^h \cdot a_j^h =0$ for all $1 \leq i < j \leq n$.
If $a_{n+1} \notin \left<a_1,
a_2 \ldots a_n\right>$, then   by Lemma~\ref{kislemma} we are done. 
If $a_{n+1} \in \left<a_1,
a_2 \ldots a_n\right>$, then $a_{n+1}=\sum\limits_1^{n-1} \varepsilon_i
a_i $  where at least two, but not all $\varepsilon_i$ are equal to 1. 
Indeed, assume that there is a unique $i$ such that $\varepsilon_i=1$,
then $a_{n+1}=a_i $   would hold, and if all of them were equal to 1,
then
$a_{n+1}=\sum\limits_1^{n-1} 
a_i =a_n$ would hold contradicting that the vectors are distinct.
Let $\varepsilon_j=1$ and $\varepsilon_k=0$  for some $j,k<n$. Then
there is a map $g$ in the group $\symp$ flipping $a_j$ and $a_k$ and fixing every
$a_i$, where $i<n$ and $i\neq j,k$. Here we used that $a_i \cdot a_j =0$ for all $1 \leq i < j \leq n$ which also implies $a_i \cdot a_{n+1} =0$
for all $1 \leq i \leq n$ because $\cdot$ is bilinear.
Now,
$\{a_i^h|i=1,2,\dots,n\}=\{a_i^{gh}|i=1,2,\dots,n\}$ is an
independent set and $a_{n+1}^h\neq a_{n+1}^{gh} $. If any of the latter two
elements is not in $\left<a_1^h,
a_2^h, \ldots. a_n^h\right>$ then we are done by
Lemma~\ref{kislemma}. Otherwise we may assume that $
a_{n+1}^h=\sum\limits_1^n \xi_i a_i^h  $, where there is an $l$ such
that $\xi_l=0$. Now, $a_l^h\notin 
\left<a_i^h| 1\leq i\leq n+1, i\neq l\right>$
and we are done again, by Lemma~\ref{kislemma}.
\end{proof}



We have finished the classification of the closed groups containing $\symp$ which preserve the $0$. We will continue with the classification of groups
not fixing the $0$. Using Theorem \ref{sympsmall} and Theorem \ref{0nofix} we can restrict our attention to the groups where the stabilizer of the $0$
is exactly the group $\symp$. There is only one closed group which contains $\symp$ in addition to those already mentioned and it
can be obtained as the automorphism group of the relation defined below \ref{hszogaut}

\begin{lemma}\label{hszogaut}
Let $\hszog (a,b,c)$ denote the following ternary relation:
$$\hszog (a,b,c) \Leftrightarrow a\cdot b + b \cdot c + c \cdot a =1 \mbox{ and }$$
$$a,b \mbox{ and } c \mbox{ are pairwise disjoint elements} $$
The automorphism group of this relation can be obtained as a semidirect product:
$$\aut(\hszog)=T \rtimes \symp$$
Moreover, $\aut(\hszog)$ is a minimal supergroup: there is no $\symp \lneqq G \lneqq \aut(\hszog)$ closed group.
\end{lemma}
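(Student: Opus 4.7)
The plan is to first identify $\aut(\hszog)$ with $T \rtimes \symp$ by splitting an arbitrary automorphism as a translation composed with a symplectic map, and then to deduce minimality from the transitivity of $\symp$ on $\mathcal{V}\setminus\{0\}$.

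For the inclusion $T \rtimes \symp \leq \aut(\hszog)$, I would note that $\symp$ preserves $\hszog$ because it preserves the form $\cdot$ itself. For a translation $t_v$, I would expand
\begin{align*}
(a+v)\cdot(b+v) + (b+v)\cdot(c+v) + (c+v)\cdot(a+v);
\end{align*}
the cross terms cancel in pairs in characteristic $2$ (using that $\cdot$ is symmetric and $v\cdot v = 0$), leaving the original $a\cdot b + b\cdot c + c\cdot a$. Since $\symp\cap T = 1$ and $\symp$ normalizes $T$ via $\sigma^{-1}t_v\sigma = t_{v^\sigma}$, the product is indeed a semidirect product.

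For the reverse inclusion, given $g \in \aut(\hszog)$, the composition $gt_{0^g}$ fixes $0$, so it suffices to prove $\aut(\hszog)_0 = \symp$. This is the one step that demands a little thought --- the main obstacle, insofar as there is one. The key observation is that $\hszog(0,b,c)$ collapses to ``$b\cdot c = 1$'': the summands $0\cdot b$ and $c\cdot 0$ vanish, and the nondegeneracy/distinctness conditions become automatic (since $b\cdot b = 0$ and $0\cdot x = 0$). Hence every $h \in \aut(\hszog)_0$ preserves the binary relation $\{(b,c) : b\cdot c = 1\}$, which on pairs of distinct nonzero elements is precisely the complement of $P_0$. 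Thus $h$ preserves $P_0$, and Proposition \ref{autp_0} forces $h \in \symp$.

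Minimality follows quickly, and in fact without needing the closedness hypothesis. Given any subgroup $\symp \lneqq G \leq \aut(\hszog)$, pick $g \in G \setminus \symp$ and write $g = \sigma t_v$ with $\sigma \in \symp$; the hypothesis $g \notin \symp$ forces $v \neq 0$. Since $\sigma \in \symp \leq G$, we obtain $t_v = \sigma^{-1} g \in G$. Homogeneity of $\mathcal{V}$ shows $\symp$ is transitive on $\mathcal{V}\setminus\{0\}$ (any two nonzero vectors generate isomorphic substructures $\{0,u\}$ and $\{0,v\}$), so for every $w \neq 0$ there is $\tau \in \symp$ with $v^\tau = w$, yielding $\tau^{-1} t_v \tau = t_w \in G$. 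Consequently $T \leq G$, and hence $G = T\cdot\symp = \aut(\hszog)$.
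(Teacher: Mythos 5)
Your proof is correct, and for the main identification $\aut(\hszog)=T\rtimes\symp$ it takes a genuinely different (and more self-contained) route than the paper. The paper first verifies $T\le\aut(\hszog)$ by the same cancellation computation, then invokes Lemma~\ref{affin} to place $\aut(\hszog)$ inside $\aff=T\rtimes\gl$, and finally intersects with the stabilizer of $0$; in doing so it asserts $\aut(\hszog)\cap\sym(\mathcal{V})_0=\symp$ without spelling out why. You bypass Lemma~\ref{affin} entirely: composing an arbitrary $g\in\aut(\hszog)$ with $t_{0^g}$ reduces everything to the stabilizer of $0$, and you identify that stabilizer by the clean observation that $\hszog(0,b,c)$ is exactly $P_1(b,c)$ (the distinctness and nonzeroness conditions being implied by $b\cdot c=1$), so Proposition~\ref{autp_0} applies. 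This both shortens the argument and supplies the justification for the step the paper leaves implicit; the paper's detour through $\aff$ has the mild advantage of reusing machinery that is needed elsewhere (Lemmas~\ref{affin_symp} and \ref{twoorbits}). The minimality argument --- extract a nontrivial translation from any $g=\sigma t_v\notin\symp$ and conjugate it around using transitivity of $\symp$ on $\mathcal{V}\setminus\{0\}$ --- is the same as the paper's, and your remark that closedness is not needed here is accurate.
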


\begin{proof}
The group $\aut(\hszog)$ will be denoted by  $\symptr$. The following calculation shows that every translation preserves $\hszog$:
$t_x(a) \cdot t_x(b)+t_x(b) \cdot t_x(c)+ t_x(c) \cdot t_x(a)=(a+x)\cdot(b+x)+(b+x)\cdot(c+x)+(c+x)\cdot(a+x)=a\cdot b + b \cdot c + c \cdot a$.

$\symptr \leq \aff$ because $ \symp(\omega,2) \lneqq \symptr \leq \sym(\ff_2^\omega)$ and $\symptr_0= \symptr \cap (\sym(\mathcal{V}))_0=\symp$
so we can use Lemma \ref{affin}.

Using that $\aff = T \rtimes \gl$ and $\symp \leq \gl $ and $T \leq \symptr \leq \aff $ we can conclude that $\aut(\hszog)=T \rtimes \symp$.
Since $\symp$ acts transitively on $\mathcal{V} \setminus \{0\}$ every non-identical translation can be conjugated to any other non-identical translation
by an element of $\symp$. If $a,b \in \mathcal{V} \setminus \{0\}$ and $\phi \in \symp$ such that $\phi(a)=b$ then
$\phi t_b \phi^{-1}(x)=\phi ( \phi^{-1} (x) + b ) = x + \phi^{-1}(b)=t_a(x)$. So there is no $\symp \lneqq G \lneqq \symptr$ closed group.
\end{proof}

\includegraphics{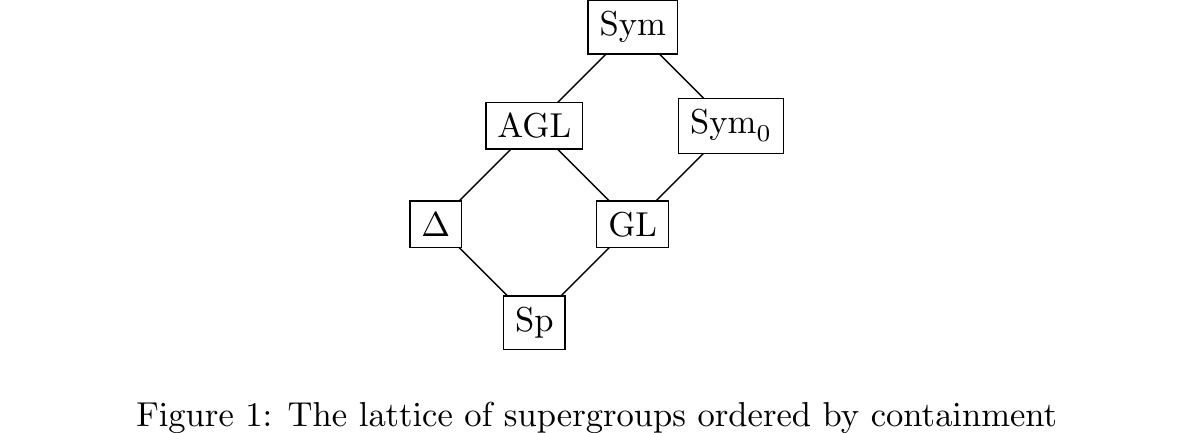}

\begin{lemma}\label{affin_symp}
Let us assume that $ \symp(\omega,2) \lneqq G \leq \sym(\ff_2^\omega)$, and $G_0=G\cap (\sym(\mathcal{V}))_0=\symp$.  Then
$G\leq \aff$.
\end{lemma}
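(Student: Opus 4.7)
The plan is to invoke Lemma \ref{affin}, so I must verify its two hypotheses for $G$. The first, $G_0 \leq \gl(\omega,2)$, is immediate: by assumption $G_0 = \symp$, and $\symp \leq \gl$. What requires work is the orbit condition: for every triple $(x_1,x_2,x_3) \in \ff_2^\omega \setminus \{0\}$ and every $y \notin \langle x_1,x_2,x_3 \rangle$, the orbit of $y$ in the pointwise stabilizer of $(x_1,x_2,x_3)$ in $G$ must be infinite.

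Since $\symp \leq G$, it suffices to prove this orbit is already infinite inside $\symp_{(x_1,x_2,x_3)}$. Set $W = \langle x_1,x_2,x_3 \rangle$ and pick a basis $c_1,\ldots,c_k$ of $W$ (with $k \leq 3$). The inner products $\eta_i := c_i \cdot y$ are fixed values in $\ff_2$. I claim there exist infinitely many $y' \in \mathcal{V} \setminus W$ with $c_i \cdot y' = \eta_i$ for every $i$: apply Proposition \ref{generic} iteratively to the linearly independent system $c_1,\ldots,c_k$ augmented by the $y'$'s already chosen, prescribing inner product $\eta_i$ with $c_i$ (and, say, $0$ with each previously chosen $y'$); each application produces a new element outside the current span. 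For any such $y'$, bilinearity forces $x_j \cdot y' = x_j \cdot y$ for every $j$, so the map fixing $x_1,x_2,x_3$ and sending $y \mapsto y'$ is a partial isomorphism between two isomorphic finitely generated substructures of $\mathcal{V}$. By homogeneity of $\mathcal{V}$, this extends to an element of $\symp$ fixing $x_1,x_2,x_3$ pointwise and carrying $y$ to $y'$, proving the orbit is infinite.

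With both hypotheses of Lemma \ref{affin} in place, the lemma yields $G \leq \aff$, as required. The only point needing care is the iterative use of Proposition \ref{generic} to guarantee infinitely many valid $y'$, but this is a routine consequence of the extension property of homogeneous structures, so I do not foresee any genuine obstacle.
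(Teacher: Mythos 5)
Your proof is correct and follows exactly the paper's route: the paper's entire proof of this lemma is ``Easy consequence of Lemma~\ref{affin},'' and you have simply supplied the verification of the orbit hypothesis (via Proposition~\ref{generic} and homogeneity of $\mathcal{V}$) that the paper leaves implicit. No issues.
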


\begin{proof}
Easy consequence of Lemma \ref{affin}.
\end{proof}

\begin{lemma}\label{twoorbits}
Let us assume that $ \symp(\omega,2) \lneqq G \leq \sym(\ff_2^\omega)$,
and $G_0=G\cap (\sym(\mathcal{V}))_0=\symp$.  Then $G$ has exactly 2 orbits on injective 3-tuples of $\mathcal{V}$ and both orbits contains linearly independent 3-tuples.
\end{lemma}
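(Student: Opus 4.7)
The plan is to reduce the orbit count on injective $3$-tuples to an orbit count of $G_0=\symp$ on injective pairs of nonzero vectors, after first establishing that $G$ is transitive on $\mathcal{V}$. For the transitivity I would pick any $f \in G \setminus \symp$; since $G_0 = \symp$, such an $f$ must send $0$ to some nonzero vector $v$, and composing with $\symp$ (which is already transitive on $\mathcal{V}\setminus\{0\}$) then produces elements of $G$ sending $0$ to every vector of $\mathcal{V}$.

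Next, I would use this transitivity to show that every $G$-orbit on injective $3$-tuples contains a representative of the form $(0,b,c)$ with $b,c \in \mathcal{V}\setminus\{0\}$ and $b \neq c$, and that two such representatives are $G$-equivalent if and only if the pairs $(b,c)$ lie in the same $\symp$-orbit. It then suffices to count $\symp$-orbits on injective pairs of nonzero vectors. In $\ff_2$ any two distinct nonzero vectors are linearly independent, and the symplectic substructure $\langle b,c\rangle$ is determined up to isomorphism by the single scalar $b \cdot c \in \ff_2$ (since $b\cdot b = c \cdot c = 0$ in characteristic~$2$). Homogeneity of $\mathcal{V}$ then forces pairs with equal $b \cdot c$ to be $\symp$-conjugate, while the invariance of the bilinear form under $\symp$ separates the two values of $b \cdot c$; Proposition~\ref{generic} shows that both values are realised. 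Hence $\symp$ has exactly two orbits on such pairs, and therefore $G$ has exactly two orbits on injective $3$-tuples.

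Each representative $(0,b,c)$ as above is already linearly independent in $\ff_2$, so both orbits automatically meet the set of linearly independent $3$-tuples. The only step in this outline that will need any real care is the transitivity of $\symp$ within each value of $b \cdot c$, which I expect to follow from the extension property of the Fra\"{\i}ss\'{e} limit $\mathbb{F}_2^\omega(+,\cdot)$ applied to the $2$-dimensional substructures $\langle b,c\rangle$ and $\langle b',c'\rangle$.
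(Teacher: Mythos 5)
Your argument for the orbit count is essentially correct and close to the paper's: transitivity of $G$ follows because any $f\in G\setminus\symp$ must move $0$ (as $G_0=\symp$), every injective $3$-tuple is then $G$-equivalent to one of the form $(0,b,c)$, and two such tuples are $G$-equivalent precisely when $(b,c)$ and $(b',c')$ lie in the same $G_0=\symp$-orbit; since any two distinct nonzero vectors of $\ff_2^\omega$ are linearly independent, homogeneity and the invariance of $\cdot$ show that $\symp$ has exactly two orbits on such pairs, separated by the value $b\cdot c$, both realised. The paper packages the lower bound slightly differently (it rules out $3$-transitivity by exhibiting a would-be $g$ fixing $0$ with $a\cdot b\neq a^g\cdot b^g$, contradicting $G_0=\symp$), but your bijection between $G$-orbits on injective $3$-tuples and $\symp$-orbits on pairs gives the same conclusion.

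However, your treatment of the second assertion is wrong. You write that each representative $(0,b,c)$ ``is already linearly independent,'' but a triple containing the zero vector is never a linearly independent set; what is independent is only the pair $\{b,c\}$. The claim to be proved is that each of the two orbits contains a $3$-tuple $(x,y,z)$ with $\{x,y,z\}$ a linearly independent set of three vectors, and this is genuinely nontrivial: it is exactly the point where one must use an element of $G$ that moves $0$, and it is needed later (Lemmas \ref{s4} and \ref{hszoges} classify the orbits via linearly independent representatives). The paper's argument is: given distinct $a,b\neq 0$, choose $a_1,a_2,a_3,a_4\in\mathcal{V}\setminus\{0\}$ with $a_i\cdot a_j=a\cdot b$ for all $i<j$ (possible by Proposition \ref{generic}), and take $g\in G$ with $0^g\neq 0$; since five distinct vectors cannot lie in a $2$-dimensional $\ff_2$-space, $\dim\langle 0^g,a_1^g,\dots,a_4^g\rangle\geq 3$ and some triple $\{0^g,a_i^g,a_j^g\}$ is linearly independent; composing with an $h\in\symp$ sending $(a,b)$ to $(a_i,a_j)$ then maps $\{0,a,b\}$ onto an independent set. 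Your outline contains no substitute for this step, so as written the proof of the second half of the lemma is missing.
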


\begin{proof}
Let $(x,y,z)$ an arbitrary injective 3-tuple of $\mathcal{V}$. Since $\symp(\omega,2)$ is transitive on $\mathcal{V}\setminus 0$,
and $G$ does fix 0, it follows that $G$ is transitive. This means that $(x^g,y^g,z^g)=(0,a,b)$ for some $a,b\in \mathcal{V}\setminus 0$.
The group $\symp$ has exactly two orbits on injective 3-tuples of the form $(0,u,v)$. These orbit are distinguished by the product $u\cdot v$.
This implies that $G$ has \emph{at most} 2 orbits on injective 3-tuples. We also need to show that $G$ cannot be 3-transitive. Suppose for contradiction
that $G$ is 3-transitive. Then there exist distict elements $a,b\in \mathcal{V}\setminus 0$ and a permutation $g\in G$ such that $0^g=0$ and
$a\cdot b\neq a^g\cdot b^g$, but this contradicts the fact that $G_0=G\cap (\sym(\mathcal{V}))_0=\symp$.

	For the second statement of the Lemma it is enough to show that for any $a,b\in \mathcal{V}\setminus 0$ distinct elements the set $\{0,a,b\}$ can be mapped into a linearly independent set. Now, let $a\neq b$ be arbitrary elements in $\mathcal{V}$. Then there exist elements $a_1,a_2,a_3,a_4 \in \mathcal{V}\setminus 0$ such that $a_i\cdot a_j=a\cdot b$ for all $1\leq i<j\leq 4$. Let $g\in G$ a permutation which does not fix 0. Then $\dim \langle 0^g,a_1^g,a_2^g,a_3^g,a_4^g\rangle\geq 3$, thus the elements $0^g,a_i^g,a_j^g$ are linearly independent for some $1\leq i<j \leq 4$. Since $a_i\cdot a_j=a\cdot b$, there exists an $h\in \symp\subset G$ such that $a^h=a_i,b^h=a_j$. Then the permutation $hg$ maps the set $\{0,a,b\}$ into a linearly independent set. 
\end{proof}

\begin{lemma}\label{s4}
Let us assume that $ \symp(\omega,2) \lneqq G \leq \sym(\ff_2^\omega)$,
and $G_0=G\cap (\sym(\mathcal{V}))_0=\symp$. Let $(a,b,c,d)$ be an injective 4-tuple of $\mathcal{V}$ such that $a+b+c+d=0$. Then for any 3-element subset $\{s,t,u\}\subset \{a,b,c,d\}$ the tuples $(s,t,u)$ and $(a,b,c)$ are on the same orbit of $G$.
\end{lemma}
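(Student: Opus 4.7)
The plan is to apply an element of $G$ to reduce to the case $a = 0$, then use Lemma \ref{twoorbits} to verify that all four $3$-subtuples share the same orbit invariant. By transitivity of $G$, choose $g \in G$ with $g(a) = 0$; Lemma \ref{affin_symp} gives $G \leq \aff$, so the 4-sum relation is preserved and $d^g = b^g + c^g$. Writing $u = b^g$ and $v = c^g$, the four $3$-subtuples $(a,b,c), (a,b,d), (a,c,d), (b,c,d)$ are carried by $g$ to $(0,u,v)$, $(0,u,u+v)$, $(0,v,u+v)$, and $(u,v,u+v)$ respectively.

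Each of the first three images begins with $0$, so its $G$-orbit coincides with its $\symp$-orbit; by Lemma \ref{twoorbits} these orbits are classified by the symplectic product of the remaining two coordinates, and the alternation of $\cdot$ (so $u \cdot u = v \cdot v = 0$) yields $u \cdot v = u \cdot (u+v) = v \cdot (u+v)$. Hence the first three tuples all lie in the $G$-orbit with invariant $u \cdot v$.

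The main obstacle is the fourth tuple $(u, v, u+v)$. To analyze its orbit, I would apply a further $h \in G$ with $h(u) = 0$, which exists by transitivity. Since the $4$-tuple $(u, v, u+v, 0)$ sums to $0$, the condition $G \leq \aff$ forces $v^h + (u+v)^h + 0^h = 0$, so the image is $(0, v^h, v^h + 0^h)$, whose orbit invariant is $v^h \cdot 0^h$. Writing $h$ as an affine map with linear part $L$, so that $h(0) = L(u)$ and $v^h = L(u + v)$, this simplifies to $L(u) \cdot L(v)$. The crux is then to verify $L(u) \cdot L(v) = u \cdot v$. Since this invariant is independent of the choice of $h$ (any two such $h$ differ by an element of $G_0 = \symp$, which preserves $\cdot$), it suffices to exhibit a single $h \in G$ with $h(u) = 0$ whose linear part lies in $\symp$; the translation $t_u$ is the natural candidate, and securing its membership in $G$ from the hypotheses $\symp \lneqq G$, $G_0 = \symp$, and Lemma \ref{hszogaut} is the subtle final step of the argument.
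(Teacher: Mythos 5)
Your reduction to a tuple of the form $(0,u,v,u+v)$ and your treatment of the three subtuples containing $0$ are correct, but the fourth subtuple is where all the difficulty lives, and your proposed resolution is circular. You reduce the problem to exhibiting some $h\in G$ with $u^h=0$ whose linear part lies in $\symp$ (e.g.\ $t_u$). However, the existence of such an $h$ is \emph{equivalent} to having a nontrivial translation in $G$ (if $x^h=x^L+w$ with $L\in\symp\leq G$, then $t_w=L^{-1}h\in G$), and hence, conjugating by the transitive group $\symp$, to $T\leq G$. Together with $G\leq\aff$ and $G_0=\symp$ this already forces $G=\symptr$, which is precisely the conclusion of Lemma~\ref{hszoges} --- and that lemma's proof \emph{uses} the present Lemma~\ref{s4}. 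Nothing in the available hypotheses produces a translation directly: $\symp\lneqq G$ only supplies an affine map whose linear part lies in $\gl$, and showing that this linear part is symplectic is exactly what the whole remaining argument is for. So the step you flag as ``subtle'' is not a loose end but a genuine gap.

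The gap can in fact be closed inside your own framework, with no translation needed. Take any $h\in G$ with $u^h=0$ and write $x^h=x^L+u^L$; applying $h$ to the whole tuple $(0,u,v,u+v)$ gives $(u^L,0,(u+v)^L,v^L)$. Now \emph{three} of the four image subtuples contain $0$, namely the images of $(0,u,v)$, $(0,u,u+v)$ and $(u,v,u+v)$, and since $\cdot$ is alternating each of these has invariant $u^L\cdot v^L$. You already showed directly that the three subtuples of $(0,u,v,u+v)$ containing $0$ have invariant $u\cdot v$. As $3+3>4$, some subtuple lies in both orbits, so the orbits coincide, $u^L\cdot v^L=u\cdot v$, and the fourth subtuple $(u,v,u+v)$ lands in the orbit with invariant $u\cdot v$, as required. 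For comparison, the paper argues differently: by Lemmas~\ref{affin_symp} and~\ref{twoorbits} the injective $4$-tuples summing to $0$ form only two $G$-orbits, so it suffices to check representatives, and for $(a,b,c,a+b+c)$ with $a,b,c$ independent and pairwise orthogonal all four subtuples visibly lie in a single $\symp$-orbit.
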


\begin{proof}
By Lemma \ref{affin_symp} and \ref{twoorbits} we get that the set of injective 4-tuples $(a,b,c,d)$ forming a two dimensional affine subspace of $\mathcal{V}$ is a union of two 4-orbits of $G$. This means that we only need to show the statement of the lemma for one single 4-tuple. 
	Let $a,b,c$ be linearly independent elements of $\mathcal{V}$ such that $a\cdot b=a\cdot c=b\cdot c=0$. Then the statement of the lemma is obvious.
\end{proof}

\begin{lemma}\label{hszoges}
Let us assume that $ \symp \lneqq G \leq \sym(\ff_2^\omega)$,
and $G_0=G\cap (\sym(\mathcal{V}))_0=\symp$. Then $G=\symptr$.
\end{lemma}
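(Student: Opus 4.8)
The plan is to first invoke Lemma~\ref{affin_symp} to place $G$ inside $\aff$; since $\symp\lneqq G$ and $G_0=\symp$, the group $G$ moves $0$ and hence acts transitively on $\mathcal{V}$. Writing an affine map as $u\mapsto u^h+w$ with $h\in\gl$, a direct expansion gives $a^g\cdot b^g+b^g\cdot c^g+c^g\cdot a^g=a^h\cdot b^h+b^h\cdot c^h+c^h\cdot a^h$, because the terms containing $w$ sum to $w\cdot\bigl((a^h+b^h)+(b^h+c^h)+(c^h+a^h)\bigr)=0$. Thus the translation part is irrelevant, and $g$ preserves $\hszog$ exactly when $h$ preserves the ternary form $a\cdot b+b\cdot c+c\cdot a$; specialising $c=0$ forces $a^h\cdot b^h=a\cdot b$, i.e.\ $h\in\symp$. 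Consequently $G\le\symptr$ is equivalent to every element of $G$ having symplectic linear part, which in turn is equivalent to $T\le G$. Hence, once I know that $G$ preserves $\hszog$, I obtain $G\le\symptr$, and since $\symp\lneqq G\le\symptr$ with $G$ closed, the minimality clause of Lemma~\ref{hszogaut} yields $G=\symptr$.

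It therefore remains to prove that $G$ preserves $\hszog$. By Lemma~\ref{twoorbits} the group $G$ has exactly two orbits on injective $3$-tuples, so it suffices to show that these two orbits are precisely $\{\hszog\}$ and its complement; equivalently, that the orbit-invariant of $(a,b,c)$ equals $F(a,b,c):=a\cdot b+b\cdot c+c\cdot a$. On triples containing $0$ this is immediate, since the orbit-invariant of $(0,x,y)$ is $x\cdot y=F(0,x,y)$. To reach a general triple I would use transitivity to send one coordinate to $0$ and read the invariant off as a product, controlling the unknown linear part of the mover by Lemma~\ref{s4}. Concretely, applying a mover $g$ with $0^g=v$ and linear part $h$ to the base triple $(0,x,y)$ preserves the invariant, hence it remains $x\cdot y$; choosing $y=x+0^{g^{-1}}$ makes the fourth point of the affine plane spanned by the image equal to $0$, so Lemma~\ref{s4} lets me slide $0$ into first position and evaluate the same invariant as a genuine product $v\cdot x^h$. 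Comparing the two evaluations forces $p^h\cdot x^h=p\cdot x$ for all $x$, where $p=0^{g^{-1}}$ is the preimage of $0$: the linear part of any element of $G$ sending a point $p$ to $0$ must preserve every product involving $p$.

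The main obstacle is the final bootstrap. This computation only shows that $h$ preserves products with the single distinguished vector $p$, whereas I need $h$ to be fully symplectic, equivalently $F$ to be the orbit-invariant on all injective triples and not merely on those meeting the affine plane of a $0$-containing representative. I expect to close this gap by exploiting the freedom in the choice of the point sent to $0$ together with the alternating defect form $B_h(x,y):=x^h\cdot y^h+x\cdot y$: the previous step says $p\in\operatorname{rad}B_h$ for the mover attached to $p$, and the cocycle behaviour of $B_h$ under composition of movers, combined with the symmetry of the orbit-invariant already supplied by $\symp$ (which gives invariance under transposing the last two coordinates), should force $B_h\equiv 0$ for every mover, i.e.\ $G\le\symptr$. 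Once $G$ is known to preserve $\hszog$, the minimality of $\symptr$ over $\symp$ in Lemma~\ref{hszogaut} finishes the proof that $G=\symptr$.
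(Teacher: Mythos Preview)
Your strategy differs from the paper's. The paper argues by contradiction: assuming some $g\in G$ violates $\hszog$, it first arranges the violation on a linearly independent triple, partitions such triples into classes $T_0,\ldots,T_3$ by the number of pairs satisfying $P_1$, shows via Lemma~\ref{s4} that $T_1$ and $T_3$ always share a $G$-orbit, and then derives a contradiction from whichever of $T_0,T_2$ the violation merges with $T_1\cup T_3$. Your route---show directly that the linear part $h$ of every $g\in G\subset\aff$ lies in $\symp$, i.e.\ that the defect form $B_h(x,y)=x^h\cdot y^h+x\cdot y$ vanishes---is more structural and, if completed, arguably cleaner; the computation up to $B_h(p,\cdot)=0$ for $p=0^{g^{-1}}$ is correct and elegant.

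However, the bootstrap you flag is a genuine gap, and your proposed closure is not yet an argument. ``Symmetry under transposing the last two coordinates'' is automatic (the form $x\cdot y$ is symmetric) and contributes nothing, and the cocycle identity $B_{h_1h_2}(x,y)=B_{h_2}(x^{h_1},y^{h_1})+B_{h_1}(x,y)$ by itself does not force $B_h=0$. What is actually needed is a further invariance. One completion: observe that $B_h$ depends only on $p$ (two movers with the same $p$ differ by an element of $G_0=\symp$, which leaves the defect unchanged), so $D(a,b,c):=B_a(b,c)$ is well-defined; comparing the orbit-invariant computed by sending each of $a,b,c$ in turn to $0$ shows $D$ is fully symmetric, trilinear, with $D(a,a,\cdot)=0$; replacing a mover $g$ by $\sigma g$ for $\sigma\in\symp$ shows $D$ is $\symp$-invariant; then expanding $D(a+a',b,c)$ for independent $a,a',b,c$ with freely chosen products forces $D\equiv 0$. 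Alternatively, conjugating $g$ by $\symp_p\le G_0$ shows $B_h$ is $\symp_p$-invariant, and transvections $\tau_q$ with $q\in p^\perp$ then force $p^\perp\subset\operatorname{rad}B_h$, whence $B_h=0$ by parity of the corank. Either way, the missing step is substantive work, not the formality your sketch suggests. One small aside: deducing $G=\symptr$ from $G\le\symptr$ via the minimality clause of Lemma~\ref{hszogaut} tacitly assumes $G$ closed, which is not among the hypotheses here; the clean argument is that $G$ is transitive with point-stabiliser $\symp=(\symptr)_0$, so $G\le\symptr$ already forces equality.
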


\begin{proof}
Let $G$ be a group satisfying the conditions of the lemma. Assume $G\neq \symptr$. Then by Lemma \ref{hszogaut} $G\lneqq \symptr$, so there is a permutation $g \in G$ which does not preserve the relation
$\hszog$. We claim that this can be realized by linearly independent element, i.e. there are elements $a,b$ and $c$ such that $\{a,b,c\}$ and $\{a^g,b^g,c^g\}$ are linearly independent sets, and $a\cdot b+b\cdot c+c\cdot a\neq a^g\cdot b^g+b^g\cdot c^g+c^g\cdot a^g$.

	By the definition of $g$ we know that there are elements $a,b,c\in \mathcal{V}$ such that $a\cdot b+b\cdot c+c\cdot a\neq a^g\cdot b^g+b^g\cdot c^g+c^g\cdot a^g$. By Lemma \ref{s4} we know that for all $u,v\in \mathcal{V}\setminus 0$ the tuples $(0,u,v)$ and $(u,v,u+v)$ lie on the same orbit. Note that $$\hszog(0,u,v)\Leftrightarrow \hszog(u,v,u+v)\Leftrightarrow P_1(u,v)$$ for any elements $u,v\in \mathcal{V}\setminus 0$. Using these observations we can assume that $a,b,c,a^g,b^g,c^g\neq 0$. Now, pick an element
$d$ such that both $d$ and $d^g$ are linearly independent from $\{a,b,c,a^g,b^g,c^g\}$. It is easy to check that for any injective $4$-tuple 
$(x,y,z,v)$ an even number
of the formulas $\hszog(x,y,z),\hszog(y,z,v),\hszog(z,v,x)$ and $\hszog(v,x,y)$ hold.  This implies that at least one of the tuples $(d,a,b),(d,b,c)$ and $(d,c,a)$ satisfies that
$g$ does not preserve the relation $\hszog$ on it (exactly one of the tuple and its image under $g$ is in $\hszog$) and these tuples contain linearly independent elements. This proves our claim.

	Now, we are ready to prove the statement of the lemma. For $i=0,1,2,3$ let $T_i$ denote the set of those linearly independent tuples $(a,b,c)$ tuples where exactly $i$ of the relations $P_1(a,b),P_1(b,c)$ and $P_1(c,a)$ hold. Then by Lemma \ref{s4} it follows that each set $T_i$ is contained in some 3-orbit of $G$. We would like to determine that which of these sets can be contained in the same orbit. At first, we show that $T_1$ and $T_3$ are on the same orbit. For this let us choose linearly independent elements $a,b,c$ in $\mathcal{V}$ such that $a\cdot b=a\cdot c=b\cdot c=1$. Then by Lemma \ref{s4} the tuples $(a,b,c)$ and $(a,b,a+b+c)$ lie on the same orbit, and it is easy to see that $(a,b,c)\in T_3$ and $(a,b,a+b+c)\in T_1$. 

	We have seen that there are elements $a,b$ and $c$ such that $\{a,b,c\}$ and $\{a^g,b^g,c^g\}$ are linearly independent sets, and $a\cdot b+b\cdot c+c\cdot a\neq a^g\cdot b^g+b^g\cdot c^g+c^g\cdot a^g$. This implies that either $T_0$ or $T_2$ is contained in the same orbit as $T_3$. We will deduce a contradiction in both cases. By Lemma \ref{twoorbits} we know that it is not possible that all $T_i$s are contained in the same orbit. 

	\paragraph{ Case 1. $T_0$ and $T_3$ are contained in the same 3-orbit of $G$} Then we know that there exists a 3-tuple $(a,b,c)\in T_3$ and a permutation $g\in G$ such that $(a^g,b^g,c^g)\in T_0$. Now, pick an element $d\in \mathcal{V}$ such that $d\not\in \langle a,b,c \rangle$, $d^g\not\in \langle a^g,b^g,c^g \rangle$, and $d\cdot a=0,d\cdot b=0, d\cdot c=1$. If $d^g\cdot a^g=d^g\cdot b^g=1$, then $(a,b,d)\in T_3$ and $(a^g,b^g,d^g)\in T_2$, which is impossible. Hence $d^g\cdot a^g=0$ or $d^g\cdot b^g$. By symmetry we can assume that the latter holds. Then $(b,c,d)\in T_2$ and $(b^g,c^g,d^g)\in T_1\cup T_0$. This is also impossible because we already know that $T_0,T_1$ and $T_3$ are contained in the same orbit.

	\paragraph{ Case 2. $T_2$ and $T_3$ are contained in the same 3-orbit of $G$}  Let $\{a,b\}$ and $\{c,d\}$ be two linearly independent sets such that $a\cdot b=0$ and $c\cdot d=1$. The group $G$ is transitive and the $G_0$ is transitive on $\mathcal{V}$, therefore $G$ is 2-transitive as well. In particular, there exists a permutation $g\in G$ such that $a^g=c$ and $b^g=d$. Now, pick an element $d\in \mathcal{V}$ such that $d\not\in \langle a,b\rangle$, $d^g\not\in \langle c,d \rangle$ and $d\cdot a=\cdot b=0$. Then $(a,b,d)\in T_0$ and $(a^g,b^g,d^g)$. This is again a contradiction since we already know that $T_1,T_2$ and $T_3$ are contained in the same orbit.

	We arrived to a contradiction in all cases, which proves the statement of the lemma.
\end{proof}

\begin{theorem}
The closed supergroups containing $\symp$ are exactly the following groups:
\begin{enumerate}
\item The group $\symp$
\item The group $\symptr$
\item The group $\gl$
\item The group $\aff$
\item The group $\symnull$
\item The group $\sym$
\end{enumerate}
\end{theorem}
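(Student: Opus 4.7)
The plan is to reduce the classification to results already established in the paper by conditioning on the pointwise stabilizer $G_0 := G \cap \sym(\mathcal{V})_0$. Since $G_0$ is closed and satisfies $\symp \leq G_0 \leq \sym(\mathcal{V})_0$, Theorem \ref{sympsmall} leaves exactly three possibilities: $G_0 \in \{\symp, \gl, \symnull\}$. I would then split on whether $G$ fixes $0$.

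If $G$ fixes $0$, then $G = G_0$ and the three cases produce $\symp$, $\gl$ and $\symnull$, accounting for three of the six groups in the list. If $G$ does not fix $0$, I would handle each value of $G_0$ separately. When $G_0 = \symnull$, the group $G$ contains all of $\sym(\mathcal{V})_0$ together with some $\tau$ with $0^\tau \neq 0$; since $\symnull$ is transitive on $\mathcal{V}\setminus\{0\}$, the $G$-orbit of $0$ is all of $\mathcal{V}$, so for any $\pi \in \sym$ one can choose $\sigma \in G$ with $0^\sigma = 0^\pi$, and then $\pi\sigma^{-1}$ fixes $0$ and hence lies in $\symnull \subseteq G$, forcing $\pi \in G$ and so $G = \sym$. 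When $G_0 = \gl$, the hypothesis $\gl \lneqq G$ of Theorem \ref{0nofix} holds, so $G$ is either $\aff$ or $\sym$; the latter would force $G_0 = \symnull$, contradicting $G_0 = \gl$, so $G = \aff$. When $G_0 = \symp$, Lemma \ref{affin_symp} gives $G \leq \aff$, and Lemma \ref{hszoges} then pins down $G = \symptr$.

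Collecting the outcomes of the case analysis produces exactly the six groups $\symp$, $\symptr$, $\gl$, $\aff$, $\symnull$ and $\sym$ claimed in the theorem. The bulk of the work has been done in Theorem \ref{sympsmall}, Theorem \ref{0nofix} and Lemma \ref{hszoges}; the only new ingredient is the short transitivity argument in the $G_0 = \symnull$ subcase, which I do not expect to present any real obstacle.
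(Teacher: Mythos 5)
Your proposal is correct and follows essentially the same route as the paper: both condition on the stabilizer $G_0=G\cap\sym(\mathcal{V})_0$, apply Theorem~\ref{sympsmall} to identify it as one of $\symp$, $\gl$, $\symnull$, and then dispatch the non-$0$-fixing cases via Theorem~\ref{0nofix} (for $G_0\supseteq\gl$) and Lemma~\ref{hszoges} (for $G_0=\symp$). The only cosmetic difference is that you treat $G_0=\symnull$ by a short direct transitivity argument where the paper folds it into the appeal to Theorem~\ref{0nofix}; both are valid.
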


\begin{proof}
The groups fixing the $0$ were described in Theorem \ref{sympsmall}: this groups are exactly $\symp, \gl$ and $\symnull$.
The groups not fixing the $0$ can be classified according to the stabilizer
of the $0$ in them. The groups where this stabilizer contains $\gl$ were described in in Theorem \ref{0nofix}:
this groups are exactly $\aff$ and $\sym$. The only group not fixing the $0$ where the stabilizer of the $0$ is $\symp$ is $\symptr$: this was proved in Lemma \ref{hszoges}.
\end{proof}

\end{document}